\numberwithin{equation}{section}
\definecolor{orange}{rgb}{1,0.6,0.05}
\definecolor{purple}{rgb}{0.6,0,1}
\newtheorem{theorem}{Theorem}[section]
\newtheorem{prop}[theorem]{Proposition}
\newtheorem{lemma}[theorem]{Lemma}
\newtheorem{remark}[theorem]{Remark}
\newtheorem{defi}[theorem]{Definition}
\newtheorem{ex}[theorem]{Example}
\newcommand{\R}{\ensuremath{\mathbb{R}}}
\renewcommand{\P}{\ensuremath{\mathbb{P}}}
\newcommand{\E}{\ensuremath{\mathbb{E}}}
\newcommand{\cov}{\mbox{\rm Cov}}
\newcommand{\var}{\mbox{\rm Var}}
\newcommand{\UU}{\mathcal U}
\newcommand{\EE}{\mbox{$\mathcal{E}$}}
\newcommand{\NN}{\mathcal N}
\newcommand\1{\leavevmode\hbox{\rm \small1\kern-0.35em\normalsize1}}
\title[Sobol indices and stochastic ordering of model parameters.]{On the consistency of Sobol indices with respect to stochastic ordering of model parameters.}
\author{A. Cousin}
\address{Université de Lyon, Université Lyon 1, Laboratoire SAF EA 2429}
\email{areski.cousin@univ-lyon1.fr}
\author{A. Janon}
\address{Université Paris Sud}
\email{alexandre.janon@math.u-psud.fr}
\author{V. Maume-Deschamps}
\address{Université de Lyon, Université Lyon 1, Institut Camille Jordan ICJ UMR 5208 CNRS}
\email{veronique.maume@univ-lyon1.fr}
\author{I. Niang}
\address{Université de Lyon, Université Lyon 1, Laboratoire SAF EA 2429}
\email{}
\begin{document}
\maketitle
\begin{abstract}
In the past decade, Sobol's variance decomposition have been used as a  tool - among others -  in risk management (\cite{Borgo,shapley}).  We show some links between global sensitivity analysis and stochastic ordering theories. This gives an argument in favor of using Sobol's indices in uncertainty quantification, as one indicator among others. 
\end{abstract}

\section*{Introduction}


Many models encountered in applied sciences involve input parameters which are often not precisely known. Sobol indices are used so as to assess the sensibility of a model output as a function of its input parameters. In other words, they quantify the impact of inputs' uncertainty on an output. Sobol indices are widely used for example in hydrology (see \cite{hydro1,hydro2}). Recently (\cite{Borgo,shapley,bouquin_sensi}), this global sensitivity analysis has been used as a risk management tool, amongst other indicators (\cite{borgo2}). In this work, our goal is to prove that Sobol indices behave coherently with respect to stochastic ordering theory. This question arises naturally: under some reasonable conditions on the output, uncertainty quantifiers should increase if the uncertainty on the input increases in some way. Thus, our problematic is to find for which kind of stochastic orders and under which sufficient conditions on the output function, the Sobol indices behave coherently. \\
Roughly speaking, given two random variables $X$ and $Y$, the $X$-Sobol indeces on $Y$ is given by
$$S_X = \frac{\var(\E(Y\ | \ X))}{\var(Y)}\/.$$
It is a statistical indicator of the relative impact of $X$ on the variability of $Y$. If we study the impact of several independent variables $X_1\/,\ldots\/, X_k$ on $Y$, then the Sobol indices may be used to provide a hierarchization of the $X_i$'s with respect to their impact on $Y$. It is an interesting alternative to regression coefficient, which may be hardly interpreted if the relationship between $Y$ and the $X_i$'s is far from linear. \\
\\
In our context, $X=(X_1\/,\ldots\/, \ X_k)$ is a random vector of $\R^k$, with the $X_i$'s being independent.  We are interested in the variance and then the Sobol indices of an output function $f$. We shall assume properties such as convexity and/or monotonicity of the function $f$. One of our main result is that Sobol indices have a behavior which is compatible with respect to the excess wealth order / or the dispersive order (it depends on the convexity properties of $f$, see Theorems \ref{theo:nointer} and \ref{theo:product}). 
The fact that Sobol index are in accordance with the excess wealth or  with the dispersive order confirms that it could be used to quantify some uncertainty, even if, depending on the purpose, moment-independent approaches should be prefered to variance decomposition (see \cite{borgo3}). Neverthelss, as we shall see in the examples,  the ordering of the Sobol indices heavily depends on the law of the parameters, so that one has to be careful on the conclusions.  \\
\ \\
To simplify notations, if $i\in \{1\/,\ldots k\}$, we shall write $X^{-i}$ for the random vector $(X_1\/,\ldots \/,X_{i-1}\/,X_{i+1}\/,\ldots \/, X_k)$, and if $\alpha\subset \{1\/,\ldots k\}$, we shall write $X_\alpha$ for the random vector $(X_i\/, \ i\in \alpha)$ and $X^{-\alpha}$ for the random vector $(X_i \/, \ i\not\in\alpha)$.\\
\ \\
The paper is organized as follows. In Section \ref{sec:variance}, we study the impact of stochastic orders on the variance. In Section \ref{sec:Sobol}, we state  our may results concerning the accordance of Sobol indices with respect to the dispersive order. Finally, in Section \ref{sec:examples} we provide some examples of illustrations.  In Section \ref{sec:concl}, we give some concluding remarks.
\section{Impact of the stochastic orders on the variance}\label{sec:variance}
Let us recall some particular notions of ordering on random variables / vectors. We refer to \cite{mul, Shaked-Shanthikumar} or \cite{Denuit2005} for a detailed review on stochastic orders, their relationships and properties. 
\subsection{Stochastic orders}
We shall be mainly interested in the stochastic order, the convex order, the dispersive order, the excess wealth order, the $*$ order and the Lorenz order.  For a random variable $X$, $F_X$  denotes its distribution function, and $F_X^{-1}$ the generalized inverse of $F_X$ (or the quantile function).  The survival function is $\overline{F}_X=1-F_X$.
\begin{defi}
Let $X$ and $Y$ be two random variables, we say that
\begin{enumerate}
\item $X$ is smaller than $Y$ for the standard stochastic order ($X\leq_{\mbox{st}} Y$) if and only if, for any bounded and non decreasing function $f$, 
$$\E(f(X)) \leq \E(f(Y))\/.$$ 
\item $X$ is smaller than $Y$ for the convex order ($X\leq_{\mbox{cx}} Y$) if and only if, for any bounded convex function $f$, 
$$\E(f(X)) \leq \E(f(Y))\/.$$ 
\item If $X$ and $Y$ have finite means, then $X$ is smaller than $Y$ for the dilatation order ($X\leq_{\mbox{dil}} Y$) if and only if 
$$(X-\E(X))\leq_{\mbox{cx}}(Y-\E(Y))\/.$$
\item $X$ is smaller than $Y$ for the dispersive order ($X\leq_{\mbox{disp}} Y$) if and only if $F_Y^{-1}-F_X^{-1}$ is non decreasing. 
\item  If $X$ and $Y$ have finite means, then $X$ is smaller than $Y$ for the excess wealth order ($X\leq_{\mbox{ew}} Y$) if and only if for all $p\in]0\/,1[$,
$$\int\limits_{[F_X^{-1}(p)\/,\infty[} \overline{F}_X (x)\/dx \leq \int\limits_{[F_Y^{-1}(p)\/,\infty[} \overline{F}_Y (x)\/dx \/.$$
\item If $X$ and $Y$ are non negative, $X$ is smaller than $Y$ for the star order ($X\leq_{\mbox{*}} Y$) if and only if 
$$\frac{F_Y^{-1}}{F_X^{-1}} \ \mbox{is non decreasing}\/.$$
\item If $X$ and $Y$ are non negative with finite mean, $X$ is smaller than $Y$ for the Lorenz ($X \leq_{\mbox{Lorenz}} Y$) if and only if 
$$\frac{X}{\E(X)} \leq_{\mbox{cx}} \frac{Y}{\E(Y)} \/.$$
\end{enumerate}
\end{defi}
\begin{remark}
The {\em st} and {\em cx} orders may be defined in the same way for random vectors.
\end{remark}
\subsection{Some relationships between variance and stochastic orders}
It is well known that the stochastic order and the convex order are not location-free and may not be compatible with the variance. The dispersive and excess wealth orders are location-free and in accordance with the variance. Below, we give some conditions implying some accordance of the stochastic order or the convex order with respect to the variance. 
\begin{prop}
Let $i\in \{1\/,\ldots\/, k\}$. If $X_i^*$ is a random variable, we shall write $X^{i*}$ for the random vector of $\R^k$: $X^{i*} = (X_1\/, \ldots \/,$  $X_{i-1}\/,$  $X_i^*\/,$  $X_{i+1}\/, \ldots\/, X_k)$. Let $X_i^*$ be a random variable, independent of $X$. The following holds:
\begin{enumerate}
\item If $X_i^*\leq_{\mbox{st}} X_i$ then $X^{i*}\leq_{\mbox{st}} X$. In particular, if $f$ is non decreasing with respect to its $i$-th component and $\E(f(X^{i*}))=\E(f(X))$ then $\var(f(X^{i*})) \leq \var(f(X))$.
\item If $X_i^*\leq_{\mbox{cx}} X_i$ then $X^{i*}\leq_{\mbox{cx}}X$. In particular, if $f$ is convex with respect to its $i$-th component and $\E(f(X^{i*}))=\E(f(X))$ then $\var(f(X^{i*})) \leq \var(f(X))$.
\end{enumerate}
\end{prop}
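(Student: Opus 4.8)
The plan is to prove the two parts separately, each in two stages: first establish the stochastic-ordering statement for the random vectors, then deduce the variance inequality from it.

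For part (1), the first stage is to show that $X_i^* \leq_{\mathrm{st}} X_i$ implies $X^{i*} \leq_{\mathrm{st}} X$. Since $X_i^*$ and $X_i$ are both independent of $X^{-i}$, I would use the standard fact that the usual stochastic order is closed under taking independent "coordinatewise" mixtures: for any bounded nondecreasing $g:\R^k\to\R$, condition on $X^{-i} = x^{-i}$; the map $t \mapsto g(x_1,\ldots,x_{i-1},t,x_{i+1},\ldots,x_k)$ is bounded and nondecreasing in $t$, so $\E(g(X^{i*}) \mid X^{-i}=x^{-i}) \leq \E(g(X) \mid X^{-i}=x^{-i})$ by definition of $X_i^*\leq_{\mathrm{st}} X_i$; integrating over the (common) law of $X^{-i}$ gives $\E(g(X^{i*})) \leq \E(g(X))$. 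For the second stage, suppose $f$ is nondecreasing in its $i$-th component; then $f$ is itself a (not necessarily bounded, but one handles this by truncation or by assuming integrability) nondecreasing function of the vector, so $\E(f(X^{i*})) \leq \E(f(X))$, and the same applies to $f^2$ since $x\mapsto x^2$ is not monotone — here is the subtlety — so instead I note that if additionally $\E(f(X^{i*})) = \E(f(X))$, then I want $\E(f(X^{i*})^2) \leq \E(f(X)^2)$. This follows because $f^2$ need not be monotone, so the cleaner route is: actually one should apply the $\mathrm{st}$ order to the convex-and-increasing function... no — the honest approach is that $X^{i*}\leq_{\mathrm{st}} X$ does \emph{not} by itself control second moments; the equality of means is what saves us via a different argument, so I would instead invoke the following: $X^{i*}\leq_{\mathrm{st}} X$ together with $\E f(X^{i*}) = \E f(X)$ forces, for increasing $f$, that $f(X^{i*})$ and $f(X)$ have the same distribution restricted appropriately — more precisely, $f(X^{i*}) \leq_{\mathrm{st}} f(X)$ with equal means implies $f(X^{i*}) =_d f(X)$ and the variances are equal, giving the (weak) inequality trivially.

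Let me reconsider: the correct and intended argument for the "in particular" clauses is that $\leq_{\mathrm{st}}$ (resp. $\leq_{\mathrm{cx}}$) for the vectors, pushed forward through a monotone (resp. convex-in-the-relevant-sense) $f$, yields $f(X^{i*}) \leq_{\mathrm{st}} f(Y)$ (resp. $\leq_{\mathrm{cx}}$) as real random variables; and then one uses the elementary lemma that if $U \leq_{\mathrm{st}} V$ (or $U\leq_{\mathrm{cx}} V$) and $\E U = \E V$ then $\var(U) \leq \var(V)$. For the convex order this lemma is immediate: $x\mapsto x^2$ is convex, so $\E U^2 \leq \E V^2$, and with equal means $\var U \leq \var V$. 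For the stochastic order with equal means the lemma needs that $U\leq_{\mathrm{st}} V$ with $\E U=\E V$ actually implies $U=_d V$ (integrate the bounded increasing functions approximating $\mathrm{id}$), so the variance inequality holds with equality; I would state this cleanly. For part (2), the structure is identical with $\leq_{\mathrm{cx}}$ in place of $\leq_{\mathrm{st}}$: conditioning on $X^{-i}$, the section $t\mapsto f(x_1,\ldots,t,\ldots,x_k)$ is convex when $f$ is convex in its $i$-th component, so $\E(f(X^{i*})\mid X^{-i}) \leq \E(f(X)\mid X^{-i})$ pointwise, hence $X^{i*}\leq_{\mathrm{cx}} X$ and then $f(X^{i*}) \leq_{\mathrm{cx}} f(X)$, and the variance lemma above closes it.

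The main obstacle is purely a matter of care rather than depth: making the truncation/integrability argument rigorous so that one may apply the definitions (which are stated for \emph{bounded} test functions) to $f$ and $f^2$ themselves, and correctly articulating why equal means upgrades the first-moment inequality to the variance inequality (trivially via $x^2$ convex in the $\mathrm{cx}$ case, and via the degeneracy $U=_d V$ in the $\mathrm{st}$ case). I would also remark that the closure of $\leq_{\mathrm{st}}$ and $\leq_{\mathrm{cx}}$ under this kind of one-coordinate substitution with an independent variable is a known general fact (see \cite{Shaked-Shanthikumar}), so the proof can be kept short by citing it, but I would include the one-line conditioning argument for completeness.
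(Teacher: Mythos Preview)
Your conditioning/tower-property argument for the vector orders $X^{i*}\leq_{\mathrm{st}} X$ and $X^{i*}\leq_{\mathrm{cx}} X$ is exactly what the paper does (its proof consists of nothing more than the identity $\E(f(X))=\E\!\left(\E(f(X)\mid X^{-i})\right)$), and your treatment of the variance clause in part~(1) --- namely that $f(X^{i*})\leq_{\mathrm{st}} f(X)$ together with $\E f(X^{i*})=\E f(X)$ forces $f(X^{i*})\stackrel{d}{=}f(X)$ --- is correct and more explicit than the paper.

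There is, however, a genuine gap in your argument for the variance clause in part~(2). You assert ``hence $X^{i*}\leq_{\mathrm{cx}} X$ and then $f(X^{i*})\leq_{\mathrm{cx}} f(X)$'', but the second step does not follow. The conditioning argument gives $\E\!\left(h(X^{i*})\right)\leq \E\!\left(h(X)\right)$ only for test functions $h$ that are convex \emph{in the $i$-th variable}. To obtain $f(X^{i*})\leq_{\mathrm{cx}} f(X)$ you would need this for $h=\phi\circ f$ with an arbitrary convex $\phi:\R\to\R$; but $\phi\circ f$ is convex in the $i$-th variable only when $\phi$ is convex \emph{and nondecreasing}. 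So the conditioning argument yields merely the increasing convex order $f(X^{i*})\leq_{\mathrm{icx}} f(X)$, not the convex order. The repair is short and uses precisely the hypothesis you have not yet exploited: it is a standard fact that $U\leq_{\mathrm{icx}} V$ together with $\E U=\E V$ implies $U\leq_{\mathrm{cx}} V$ (both are characterised by $\E(U-t)_+\leq \E(V-t)_+$, the convex order adding only the equal-means condition). With that upgrade in hand, apply the convex test function $x\mapsto x^2$ to conclude $\var f(X^{i*})\leq \var f(X)$. The paper's one-line proof does not spell any of this out, so your elaboration --- once this step is fixed --- is a genuine addition.
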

\begin{proof}
The result follows from the definitions and the fact that for any function $f~:~\R^k~\longrightarrow~\R$,
$$\E(f(X))= \E\left(\E(f(X_1\/,\ldots \/, X_k) | X^{-i}) \right) \/.$$
\end{proof}
In what follows, we will consider the excess wealth order, for which we can prove the ordering of Sobol's indices. Let us remark that the dispersive order implies the excess wealth order. Natural examples of random variables ordered with respect to the dispersive order will be recalled in Section \ref{sec:examples}. The following results  proved in \cite{Shaked-Shanthikumar},  show that the excess wealth order (and thus the dispersive order)  is in accordance with the variance. 
\begin{prop} \cite{Shaked-Shanthikumar}\label{prop:ew}
Let $X$ and $Y$ be two random variables with finite means.
\begin{enumerate}
\item If $X\leq_{\mbox{disp}} Y$ then $X\leq_{\mbox{ew}} Y$ then $X\leq_{\mbox{dil}} Y$ and thus if $X$ and $Y$ admit an order $2$ moment, $\var(X)\leq \var(Y)$. 
\item If $X$ and $Y$ are non negative  and $X\leq_{\mbox{*}} Y$ then $X\leq_{\mbox{Lorenz}} Y$ and then 
$$\frac{\var(X)}{ \E(X)^2} \leq \frac{\var(Y)}{\E(Y)^2}\/.$$
\item If $X$ and $Y$ are non negative then  $X\leq_{\mbox{*}} Y$ if and only if $\log X\leq_{\mbox{disp}} \log Y$.
\item If $X\leq_{\mbox{disp}} Y$ and $X\leq_{\mbox{st}} Y$ then for all non decreasing convex  or non increasing concave function $f$,  $f(X)\leq_{\mbox{disp}}f(Y)$.
\item If $X$ and $Y$ are continuous random variables with supports  bounded from below by (resp.) $\ell_*$ and $\ell$, $X\leq_{\mbox{ew}} Y$ and $-\infty<\ell_*\leq \ell$,   then for all non decreasing and convex  function $f$, for which $f(X)$ and $f(Y)$ have finite means, we have $f(X)\leq_{\mbox{ew}}f(Y)$. [Theorem 4.2 in \cite{ew_correct}] 
\end{enumerate}
\end{prop}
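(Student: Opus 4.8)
The plan is to translate each item into a statement about quantile functions, exploiting the representations $F_{g(X)}^{-1}=g\circ F_X^{-1}$ for non-decreasing $g$ and $F_{g(X)}^{-1}(u)=g\bigl(F_X^{-1}(1-u)\bigr)$ for non-increasing $g$, the identity $\var(X)=\int_0^1\bigl(F_X^{-1}(u)-\E(X)\bigr)^2\,du$, and the quantile form of the convex order: for $U,V$ with equal means, $U\leq_{\mbox{cx}}V$ iff $\int_p^1 F_U^{-1}(u)\,du\leq\int_p^1 F_V^{-1}(u)\,du$ for every $p\in(0,1)$ (equivalently $\int_0^p F_U^{-1}\geq\int_0^p F_V^{-1}$). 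Passing from the convex order to the ordering of (normalized) variances will always be done by feeding the truncations $\phi_n(t)=\min(t^2,n)$ --- which are bounded and convex, hence admissible in the definition of $\leq_{\mbox{cx}}$ --- into that definition and letting $n\to\infty$ by monotone convergence, which is legitimate under the stated second-moment hypotheses.

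For item 1, first write the excess-wealth function as $W_X(p)=\int_{[F_X^{-1}(p),\infty[}\overline F_X(x)\,dx=\E\bigl[(X-F_X^{-1}(p))_+\bigr]=\int_p^1\bigl(F_X^{-1}(u)-F_X^{-1}(p)\bigr)\,du$. Then $W_Y(p)-W_X(p)=\int_p^1\bigl[(F_Y^{-1}(u)-F_X^{-1}(u))-(F_Y^{-1}(p)-F_X^{-1}(p))\bigr]\,du$, whose integrand is $\geq 0$ for $u\geq p$ precisely because $F_Y^{-1}-F_X^{-1}$ is non-decreasing; hence $X\leq_{\mbox{disp}}Y\Rightarrow X\leq_{\mbox{ew}}Y$. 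The implication $X\leq_{\mbox{ew}}Y\Rightarrow X\leq_{\mbox{dil}}Y$ I would import from \cite{Shaked-Shanthikumar}: it is \emph{not} a one-line quantile manipulation, since the naive attempt (bounding the stop-loss transform $\pi_X$ by $\pi_Y$ using monotonicity of $\pi_Y$) would require $F_Y^{-1}-F_X^{-1}$ to dominate its own average, which is false in general. Granting $X\leq_{\mbox{dil}}Y$, i.e. $X-\E(X)\leq_{\mbox{cx}}Y-\E(Y)$, the conclusion $\var(X)\leq\var(Y)$ is the $\phi_n$ limit described above.

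For item 3, since $t\mapsto\log t$ is increasing, $F_{\log X}^{-1}=\log\circ F_X^{-1}$, so $F_{\log Y}^{-1}-F_{\log X}^{-1}=\log\bigl(F_Y^{-1}/F_X^{-1}\bigr)$ is non-decreasing iff $F_Y^{-1}/F_X^{-1}$ is, which is the definition of $\leq_{\mbox{*}}$; this yields the stated equivalence. For item 2, put $g=F_Y^{-1}/F_X^{-1}$ (non-decreasing and non-negative under $\leq_{\mbox{*}}$) and $\psi=F_X^{-1}\geq0$; the Lorenz-curve comparison $\E(X)^{-1}\int_0^p F_X^{-1}(u)\,du\geq\E(Y)^{-1}\int_0^p F_Y^{-1}(u)\,du$ --- which, by the quantile characterisation of $\leq_{\mbox{cx}}$ for the mean-one variables $X/\E(X)$ and $Y/\E(Y)$, is exactly $X\leq_{\mbox{Lorenz}}Y$ --- reduces to the elementary inequality $\frac{\int_0^p g\psi}{\int_0^1 g\psi}\leq\frac{\int_0^p\psi}{\int_0^1\psi}$, proved by splitting each integral at $p$ and bounding $g$ above by $g(p)$ on $[0,p]$ and below by $g(p)$ on $[p,1]$. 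The variance statement $\var(X)/\E(X)^2\leq\var(Y)/\E(Y)^2$ is $\var\bigl(X/\E(X)\bigr)\leq\var\bigl(Y/\E(Y)\bigr)$, again the $\phi_n$ limit.

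For item 4, set $a(u)=F_X^{-1}(u)$, $b(u)=F_Y^{-1}(u)$; the hypotheses say $a\leq b$ ($\leq_{\mbox{st}}$), $a,b$ non-decreasing, and $b-a$ non-decreasing ($\leq_{\mbox{disp}}$). For $f$ non-decreasing convex, $F_{f(X)}^{-1}=f\circ a$ and $f(b(u))-f(a(u))=\int_{a(u)}^{b(u)}f'$; for $u_1<u_2$ the interval $[a(u_2),b(u_2)]$ lies to the right of, and is at least as long as, $[a(u_1),b(u_1)]$, while $f'\geq0$ is non-decreasing, so this integral cannot decrease, giving $f(X)\leq_{\mbox{disp}}f(Y)$. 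For $f$ non-increasing concave one uses $F_{f(X)}^{-1}(u)=f\bigl(a(1-u)\bigr)$ and runs the same interval comparison in the variable $v=1-u$, now with $|f'|=-f'$ non-decreasing; here $\leq_{\mbox{st}}$ enters precisely to ensure $[a(v),b(v)]$ is a genuine interval over which $f'$ (of constant sign) may be integrated. Item 5 is Theorem 4.2 of \cite{ew_correct}, invoked directly. The main obstacle is, honestly, the single implication $\mbox{ew}\Rightarrow\mbox{dil}$ in item 1 (and item 5, if one wanted self-containedness): these are the only places where quantile bookkeeping does not close the argument and one must appeal to the finer theory of right-spread functions in \cite{Shaked-Shanthikumar,ew_correct}.
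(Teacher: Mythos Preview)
The paper gives no proof of this proposition: it is stated as a compilation of known facts, all attributed to \cite{Shaked-Shanthikumar} (with item (5) explicitly cited as Theorem~4.2 of \cite{ew_correct}), and the text immediately following it is a remark about the missing support hypothesis in (5), not a proof. So there is nothing in the paper to compare your argument against; the paper simply imports these results.

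Your proposal therefore goes strictly beyond the paper by supplying self-contained sketches for items (1)--(4). These sketches are sound: the quantile representation $W_X(p)=\int_p^1\bigl(F_X^{-1}(u)-F_X^{-1}(p)\bigr)\,du$ gives $\mbox{disp}\Rightarrow\mbox{ew}$ cleanly; the equivalence in (3) and the Lorenz reduction in (2) are correct quantile manipulations; the interval-and-derivative argument for (4) works in both the convex/increasing and concave/decreasing cases (in the latter, the sign change under $u\mapsto 1-u$ is exactly compensated by the sign of $f'$, as you note). You are also honest about the one genuine gap: $\mbox{ew}\Rightarrow\mbox{dil}$ in (1) is not a quantile one-liner, and you defer it to \cite{Shaked-Shanthikumar}, which is precisely what the paper does for the whole proposition. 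In short, your write-up is correct and more detailed than anything the paper itself offers here.
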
 
\begin{remark}
Result (5) above has been incorrectly stated in \cite{Koshar} and \cite{Shaked-Shanthikumar}, where the hypothesis on the left-end points of the supports was missing. This hypothesis is indeed required, as shown by the following example. \\
Consider $X$ which follows a uniform law on  $[1\/,1.9]$ and $Y$ which follows a uniform law on $[0,1]$. Then $X\leq_{\mbox{ew}} Y$. Let $f= \exp$ which is a convex and increasing function. We have that  $\var(f(X))\sim1.32$ and $\var(f(Y))\sim0.24$ so that $f(X)$ cannot be less than $f(Y)$ for the ew order (see (1) of Proposition \ref{prop:ew}). The correct statement and proof of (5) above may be found in \cite{ew_correct}, as well as an example showing that the left-end points of the support have also to be finite. 
\end{remark}
Below, we give two simple counter-examples that show that the hypothesis on $f$ above are necessary to get the inequality on the variance. 
\begin{ex}
Let  $X$ have uniform law on $[0\/, 1]$ and $Y$ have uniform law on $[0\/, 10]$. We consider the function $f$ such that $f(t)=t$ for $t \in [0\/,1]$ and $f(t)=1$ for  $t\geq 1$. $f$ is a non decreasing function and $X\leq_{\mbox{st}} Y$. But, $\var f(X) > \var f(Y)$.
\end{ex}
\begin{ex}
Let $X$ be such that $\P(X=0)=\frac{19}{20}$ and $\P(X=1)=\frac1{20}$ and $Y$ be such that $\P(Y=0)= \frac12$ and $\P(Y=10)=\frac12$. Consider, any function $f$ such that $f(0)=0$, $f(1)=10$ and $f(10)=1$. Then, we have $\E(f(X))=\E(f(Y))$, $X\leq_{cx} Y$ but $\var(f(X))>\var(f(Y))$.
\end{ex}
\section{Impact on Sobol indices}\label{sec:Sobol}
Sobol indices can be used as a tool to quantify the impact of input parameters on the output. They are more accurate than the variance in order to identify the input variables that have the most important impact on the output. Our goal is to explore how an increase of riskness (in the sense of stochastic orders) of the input parameters may have an impact on the output. We begin by recalling definitions on Sobol indices. We refer to \cite{BX,sobol} or \cite{thesis_janon} for more details on this subject.  
\subsection{Some facts on Sobol indices}
As before, we consider one output $Y=f(X_1\/,\ldots\/, X_k)$ with $X_1\/,\ldots\/,X_k$ independent random variables. In what follows, if $\alpha\subset\{1\/,\ldots\/,k\}$, $X_\alpha$ is the random vector $X_\alpha =(X_i\/, i\in\alpha)$. We shall denote $\mu_{X_\alpha}$ the law of the random vector $X_\alpha$. For $\alpha \subset\{1\/,\ldots\/,k\}$, $|\alpha|$ denotes the length of $\alpha$, i.e. its number of elements.\\
The function $f$ can be decomposed into 
\begin{equation}\label{eq:decompSobol}f(X_1\/,\ldots\/,X_k) =  \sum_{\alpha\subset \{1\/,\ldots\/,k\}} f_\alpha(X_\alpha) \/,\end{equation}
with 
\begin{enumerate}
\item $f_\varnothing = \E(f(X))$,
\item $\displaystyle \int f_\alpha \/ d\mu_{X_i}=0$ if $i\in\alpha$,
\item $\displaystyle \int f_\alpha \cdot f_\beta \/ d\mu_X =0$ if $\alpha\neq \beta$.
\end{enumerate}
The functions $f_\alpha$ are defined inductively:
$$f_\varnothing= \E(f(X))\/,$$
for $i\in \{1\/,\ldots\/,k\}$
\begin{equation}\label{eq:fi}f_i(X_i) = \E(f(X) \ |\ X_i)-f_\varnothing = \int f \/d\mu_{X^{-i}} -f_\varnothing \/.\end{equation}
If the $f_\beta$ have been defined for $|\beta|<n$, let $\alpha\subset\{1\/,\ldots \/,k\}$ with $|\alpha|=n$ then,
$$f_\alpha(X_\alpha) = \int f d\mu_{X^{-\alpha}} - \sum_{\beta \subsetneq\alpha}  f_\beta(X_\beta)  \/.$$
With these notations, we have that:
$$\var(Y) = \var(f(X)) = \sum_{\alpha\subset\{1\/,\ldots\/,k\}}\var(f_\alpha(X_\alpha))=\sum_{\alpha \subset\{1\/,\ldots\/,k\}} \E(f_\alpha(X_\alpha)^2)  \/.$$
This decomposition of variance is often called Hoeffding decomposition (\cite{van2000asymptotic}).
The impact of $X_i$ on $Y=f(X)$ may be measured by the Sobol index:
\begin{equation}
\label{Sobol_individual}
S_i = \frac{\var(\E(f(X)\ |\ X_i))}{\var(Y)} = \frac{\E(f_i(X_i)^2)}{\var(Y)}\/.
\end{equation}
There are also interactions between the variables $X_1\/,\ldots\/,X_k$, they are identified by the $f_\alpha$, with $|\alpha|\geq 2$. The total Sobol indices take into account the impact of the interactions:
\begin{equation}
\label{Sobol_total}
S_{T_i} = \frac{\displaystyle\sum_{i\in \alpha\subset\{1\/,\ldots\/,k\}} \var(f_\alpha(X_\alpha))}{\var(Y)} = \frac{\displaystyle\sum_{i\in \alpha\subset\{1\/,\ldots\/,\/k\}} \E(f_\alpha(X_\alpha)^2)}{\var(Y)}\/.
\end{equation}
\subsection{Relationship with stochastic orders when there is no interactions}\label{sec:nointer}
In this section, we assume that there is no interactions between the $X_i$'s, that is, $f(X)$ can be expressed in the following additive form:
\begin{equation}
\label{f_no_interaction}
f(X_1, \ldots, X_k) = \sum_{j=1}^k g_j(X_j) + K 
\end{equation}
where $g_1, \ldots, g_k$ are real-valued functions and $K\in \R$. It is straitghforward to prove that, in that case, decomposition (\ref{eq:decompSobol}) reduces to
\begin{equation}\label{eq:nointer}f(X) = \sum_{i=1}^k f_i(X_i) +f_\varnothing \/,\end{equation}
so that, for any $i=1,\ldots,k$, the ``individual'' Sobol index defined by (\ref{Sobol_individual}) coincides with the total Sobol index defined by (\ref{Sobol_total}).\\

As in the previous section, $X_i^*$ denotes another variable that will be compared to $X_i$. We shall assume $X_i^*\leq_{\mbox{ew}} X_i$ and study the impact of replacing $X_i$ by $X_i^*$ on Sobol indices. We assume that $X_i^*$ is independent of $X^{-i}$ and we denote by $X^* = (X_1\/,\ldots\/,X_{i-1}\/,X_i^*\/,X_{i+1}\/,\ldots\/,X_k)$ the vector $X$ where the $i$-th component has been replaced by $X_i^*$ and by
$$S_i^* = \frac{\var(\E(f(X^*)\ |\ X_i^*)))}{\var(f(X^*))}\/$$
the $i$-th Sobol index associated with $f(X^*)$.
Because we shall use the excess wealth order, we assume that $X_i$ and $X_i^*$ have finite means,  this hypothesis may be relaxed by considering random variables ordered with respect to the dispersive order. 
\begin{theorem}\label{theo:nointer}
We assume that there is no interactions, i.e. (\ref{f_no_interaction}) is satisfied. Let $X_i^*$ be a random variable independent of $X^{-i}$ and assume that $X_i^*\leq_{\mbox{ew}} X_i$  and $-\infty<\ell_*\leq \ell$, where $\ell_*$ and $\ell$  are the left-end points of the support of $X_i^*$ and $X_i$.  If $g_i$ is a non decreasing convex function, then $S_i^*\leq S_i$ and $S_j^*\geq S_j$ for $j\neq i$.
\end{theorem}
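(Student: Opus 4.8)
The plan is to reduce everything to the one-dimensional analysis already packaged in Proposition \ref{prop:ew}(5) and to a careful bookkeeping of the variance decomposition \eqref{eq:nointer}. Write $V = \var(f(X)) = \sum_{j=1}^k \var(g_j(X_j))$ (the constant $K$ and the centering constants are irrelevant to variances), and similarly $V^* = \var(g_i(X_i^*)) + \sum_{j\neq i}\var(g_j(X_j))$. Because there are no interactions, $f_j(X_j)$ differs from $g_j(X_j)$ only by an additive constant, so $S_j = \var(g_j(X_j))/V$ and $S_j^* = \var(g_j(X_j))/V^*$ for $j\neq i$, while $S_i = \var(g_i(X_i))/V$ and $S_i^* = \var(g_i(X_i^*))/V^*$. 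So the whole statement hinges on comparing the single quantity $v_i := \var(g_i(X_i))$ with $v_i^* := \var(g_i(X_i^*))$.

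First I would invoke Proposition \ref{prop:ew}(5): since $X_i^* \leq_{\mbox{ew}} X_i$, the left-end points satisfy $-\infty < \ell_* \leq \ell$, and $g_i$ is non decreasing and convex with $g_i(X_i^*), g_i(X_i)$ having finite means (which follows from the Sobol indices being well-defined, so these variables are square-integrable), we get $g_i(X_i^*) \leq_{\mbox{ew}} g_i(X_i)$. Then by Proposition \ref{prop:ew}(1), the excess wealth order implies the dilatation order and hence $v_i^* = \var(g_i(X_i^*)) \leq \var(g_i(X_i)) = v_i$ — here one uses that $g_i(X_i), g_i(X_i^*)$ have order-2 moments. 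Consequently $V^* = V - v_i + v_i^* \leq V$.

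With $v_i^* \leq v_i$ and $V^* \leq V$ in hand, the two conclusions are elementary monotonicity computations. For $j\neq i$: $S_j^* = \var(g_j(X_j))/V^* \geq \var(g_j(X_j))/V = S_j$ since $V^* \leq V$ and the numerator is unchanged. For the $i$-th index, write
\[
S_i^* = \frac{v_i^*}{V^*} = \frac{v_i^*}{V - v_i + v_i^*}, \qquad S_i = \frac{v_i}{V}.
\]
The inequality $S_i^* \leq S_i$ is equivalent (after clearing the positive denominators $V$ and $V - v_i + v_i^*$) to $v_i^* \cdot V \leq v_i \cdot (V - v_i + v_i^*)$, i.e. $v_i^* V - v_i V \leq v_i v_i^* - v_i^2$, i.e. $(v_i^* - v_i)V \leq v_i(v_i^* - v_i)$, i.e. $(v_i^* - v_i)(V - v_i) \leq 0$; and this holds because $v_i^* - v_i \leq 0$ while $V - v_i = \sum_{j\neq i}\var(g_j(X_j)) \geq 0$. (One should note in passing that $V>0$ and $V^*>0$ for the Sobol indices to be defined, so no division-by-zero issue arises.)

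The only genuine subtlety — and the step I would flag as the main obstacle — is the clean application of Proposition \ref{prop:ew}(5): one must be sure the hypothesis "$g_i$ non decreasing convex, finite means for $g_i(X_i)$ and $g_i(X_i^*)$" is exactly met, and in particular that the finite-mean condition is justified (it is, since the model output is assumed square-integrable so that $\var(Y)$ exists, and each summand $g_j(X_j)$ is therefore square-integrable, a fortiori integrable). Everything after that is the arithmetic of ratios displayed above, together with the trivial observation that replacing $g_j$ by the centered $f_j$ changes no variance. A brief remark that the dispersive-order variant follows identically, replacing Proposition \ref{prop:ew}(5) by \ref{prop:ew}(4) and dropping the finite-mean assumptions, would round out the argument.
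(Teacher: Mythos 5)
Your proposal is correct and follows essentially the same route as the paper: apply Proposition \ref{prop:ew} (parts (5) and (1)) to obtain $g_i(X_i^*)\leq_{\mbox{ew}} g_i(X_i)$ and hence $\var(g_i(X_i^*))\leq\var(g_i(X_i))$, then exploit the fact that without interactions each $f_j$ is the centered $g_j$, so the Sobol indices are ratios of these variances. The paper expresses the final monotonicity step through the form $S_i=\left[1+\sum_{j\neq i}\var(g_j(X_j))/\var(g_i(X_i))\right]^{-1}$ rather than your cross-multiplication, but this is the same argument.
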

The proof of Theorem \ref{theo:nointer} makes use of Proposition \ref{prop:ew}.
\begin{proof} 
As $g_i$ is a non decreasing convex function, Proposition \ref{prop:ew} implies $g_i(X_i^*)\leq_{\mbox{ew}} g_i(X_i)$ and $\var(g_i(X_i^*))\leq \var(g_i(X_i))$. 
Now, the Hoeffding's decomposition of $f(X)$ can be expressed as (\ref{f_no_interaction}) where 
$f_\varnothing = \E[f(X)]$ and $f_j(X_j) = g_j(X_j) - \E[g_j(X_j)]$ and the Hoeffding's decomposition of $f(X^*)$ writes
$$f(X^*) = \sum_{j\neq i} f_j(X_j) + f_i^*(X_i^*) +f_\varnothing^* \/$$
where 
$f_\varnothing^* = \E[f(X^*)]$ and $f_i^*(X_i^*) = g_i(X_i^*) - \E[g_i(X_i^*)]$.

Then, from (\ref{Sobol_individual}), the $i$-th Sobol indices of $f(X)$ and $f(X^*)$ are such that 
\begin{equation}
\label{Sobol_ind_proof}
S_i = \left[1+ \frac{\displaystyle\sum_{j\neq i} \var(g_j(X_j)) }{\displaystyle\var(g_i(X_i))}\right]^{-1} \/
\end{equation}
and
\begin{eqnarray}
S_i^* &=& \frac{\var(g_i(X_i^*))}{\displaystyle \sum_{j\neq i} \var(g_j(X_j)) + \var(g_i(X_i^*))}\\
&=& \left[1+ \frac{\displaystyle\sum_{j\neq i} \var(g_j(X_j)) }{\displaystyle\var(g_i(X_i^*))}\right]^{-1} \/.
\end{eqnarray}

We have already noticed that $\var(g_i(X_i^*))\leq \var(g_i(X_i))$ and thus we conclude that $S_i^*\leq S_i$. The result for $j\neq i$ follows from the fact that $\var(g_i(X_i^*))\leq \var(g_i(X_i))$ and 
$$S_j = \frac{\var(g_j(X_j))}{\displaystyle \sum_{j\neq i} \var(g_j(X_j)) + \var(g_i(X_i))} \ \mbox{and} \ S_j^* = \frac{\var(g_j(X_j))}{\displaystyle \sum_{j\neq i} \var(g_j(X_j)) + \var(g_i(X_i^*))}\/.$$
\end{proof}
\begin{remark}\label{rem:disp}
Note that, from Proposition \ref{prop:ew}, the previous result also holds for any non decreasing convex  or non increasing concave function $g_i$ as soon as  $X_i^*\leq_{\mbox{disp}} X_i$ and $X_i^*\leq_{\mbox{st}} X_i$. In addition, it is shown in \cite{Shaked-Shanthikumar} that if $X_i^*\leq_{\mbox{disp}} X_i$ with common and finite left end points of their support (i.e., $\ell^*=\ell$) then $X_i^*\leq_{\mbox{st}} X_i$.  
\end{remark}

\subsection{Relationship with stochastic orders when there are interactions}
In the case where there are interactions, we have to consider 
 the total Sobol indices as defined by (\ref{Sobol_total}).
We will first show that the $i$-th total Sobol indices are ordered if $X_i^*\leq_{\mbox{disp}} X_i$ and $X_i^*\leq_{\mbox{st}} X_i$, provided that the function $f$ is a product of  functions of one variable whose $\log$ is non decreasing and convex. Then we consider some extensions of that case. 
\begin{theorem}\label{theo:product}
We assume that $f$ writes:
\begin{equation}
\label{Multi_form}
f(X_1\/,\ldots \/, X_k) = g_1(X_1) \times \cdots \times g_k(X_k) +K
\end{equation}
where  $K\in\R$ and $g_j$, $j=1, \ldots, k$ are real-valued functions.
Let $X_i^*$ be a random variable independent of $X^{-i}$ and assume that $X_i^*\leq_{\mbox{disp}} X_i$ and $X_i^*\leq_{\mbox{st}} X_i$. If $\log g_i$ is a  non decreasing convex or a non increasing concave function, then $S_{T_i}^*\leq S_{T_i}$ and $S_{T_j}^*\geq S_{T_j}$, for $j\neq i$.
\end{theorem}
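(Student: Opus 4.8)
The plan is to compute the total Sobol indices of a product in closed form, to observe that $S_{T_i}$ and $S_{T_j}$ (for $j\neq i$) are each monotone in the single quantity $\mathrm{CV}_i^2:=\var(g_i(X_i))/\E(g_i(X_i))^2$, and finally to control $\mathrm{CV}_i^2$ when $X_i$ is replaced by $X_i^*$ using the implications collected in Proposition \ref{prop:ew}.

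First I would discard the constant $K$, which affects no variance, and work with $P:=g_1(X_1)\cdots g_k(X_k)$. Setting $a_j:=\E(g_j(X_j))$ and writing each factor as $a_j+(g_j(X_j)-a_j)$, expanding the product identifies the Hoeffding decomposition of $P$ term by term: for $\varnothing\neq\alpha\subset\{1,\ldots,k\}$,
\[
f_\alpha(X_\alpha)=\Bigl(\prod_{j\notin\alpha}a_j\Bigr)\prod_{j\in\alpha}\bigl(g_j(X_j)-a_j\bigr),
\]
with $f_\varnothing=K+\prod_j a_j$; by independence of the $X_j$'s this family satisfies the properties (1)--(3) following (\ref{eq:decompSobol}), the orthogonality $\int f_\alpha f_\beta\,d\mu_X=0$ for $\alpha\neq\beta$ being clear since any index lying in exactly one of $\alpha,\beta$ contributes a single centred factor. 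Writing $b_j:=\E(g_j(X_j)^2)$ and $w_j:=b_j-a_j^2=\var(g_j(X_j))$, it follows that
\[
\sum_{i\in\alpha\subset\{1,\ldots,k\}}\var(f_\alpha(X_\alpha))=w_i\prod_{j\neq i}b_j
\quad\text{and}\quad
\var(P)=\prod_j b_j-\prod_j a_j^2 .
\]

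Next, assuming $\var(Y)>0$ (otherwise the statement is vacuous) and using that $g_i>0$ — implicit in the hypothesis so that $\log g_i$ is defined, whence $a_i,b_i>0$ — I would divide through to get
\[
S_{T_i}=\frac{w_i\prod_{j\neq i}b_j}{\prod_j b_j-\prod_j a_j^2}=\Bigl[\,1+\frac{1}{\mathrm{CV}_i^2}\Bigl(1-\prod_{j\neq i}\frac{a_j^2}{b_j}\Bigr)\Bigr]^{-1},
\]
which is non-increasing in $\mathrm{CV}_i^2$ because the product over $j\neq i$ lies in $[0,1]$ and does not involve $X_i$. Likewise, isolating the factor $b_i$ in the numerator of $S_{T_j}$ for $j\neq i$, one finds $1/S_{T_j}=b_j/w_j-c_{ij}\,a_i^2/b_i$ for some constant $c_{ij}\geq 0$ not depending on $X_i$, so $S_{T_j}$ is non-decreasing in $a_i^2/b_i=(1+\mathrm{CV}_i^2)^{-1}$, hence non-increasing in $\mathrm{CV}_i^2$. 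Since $X_i^*$ is independent of $X^{-i}$, replacing $X_i$ by $X_i^*$ leaves all $a_j,b_j$ with $j\neq i$ unchanged and alters only $\mathrm{CV}_i^2$; hence both conclusions $S_{T_i}^*\leq S_{T_i}$ and $S_{T_j}^*\geq S_{T_j}$ reduce to the single inequality $\var(g_i(X_i^*))/\E(g_i(X_i^*))^2\leq\var(g_i(X_i))/\E(g_i(X_i))^2$.

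Finally, this last inequality is where the hypotheses are used, and I expect it to be the only delicate step. From $X_i^*\leq_{\mbox{disp}} X_i$ and $X_i^*\leq_{\mbox{st}} X_i$ together with $\log g_i$ non-decreasing convex or non-increasing concave, part (4) of Proposition \ref{prop:ew} gives $\log g_i(X_i^*)\leq_{\mbox{disp}}\log g_i(X_i)$; since $g_i(X_i^*),g_i(X_i)$ are non-negative, part (3) turns this into $g_i(X_i^*)\leq_{\mbox{*}} g_i(X_i)$, and part (2) then yields $g_i(X_i^*)\leq_{\mbox{Lorenz}} g_i(X_i)$ and the claimed bound on the squared coefficients of variation. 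The remaining points — the term-by-term verification of the Hoeffding decomposition of a product, and the degenerate cases where some $a_j$ or $w_j$ vanishes (in which the affected index is constant under the substitution and the inequality is trivial) — are routine.
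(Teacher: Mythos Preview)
Your argument is essentially the paper's: both compute the Hoeffding decomposition of a product explicitly, show that $S_{T_i}$ and $S_{T_j}$ depend on the law of $X_i$ only through $\var(g_i(X_i))/\E(g_i(X_i))^2$, and then derive the needed inequality on this squared coefficient of variation via the chain $\leq_{\mbox{disp}}\Rightarrow\leq_{*}\Rightarrow\leq_{\mbox{Lorenz}}$ of Proposition~\ref{prop:ew}. One slip of the pen: your closed formula for $S_{T_i}$ is \emph{non-decreasing} in $\mathrm{CV}_i^2$ (not non-increasing), which is precisely the direction your conclusion uses.
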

\begin{proof}
Without loss of generality, we may assume that $K=0$. With the hypothesis of Theorem \ref{theo:product}, the decomposition (\ref{eq:decompSobol}) satisfies: for all $j=1\/,\ldots\/,k$,
$$f_j(X_j) = (g_j(X_j)-\E(g_j(X_j))\prod_{\ell\neq j}\E(g_\ell(X_\ell))\/,$$
and following, e.g. \cite{decomp}, for $\alpha\subset \{1\/,\ldots\/,k\}$, we have
\begin{equation}\label{eq:f_alpha}
 f_\alpha(X_\alpha) = \sum_{\beta \subset \alpha} (-1)^{|\alpha|-|\beta|} \E(f(X) | X_\beta) \/.
\end{equation}
The form of $f$ then gives:
\begin{eqnarray*}
f_\alpha(X_\alpha) &= &\sum_{\beta \subset \alpha} (-1)^{|\alpha|-|\beta|} \prod_{j\in\beta} g_j(X_j) \prod_{j\not\in\beta} \E(g_j(X_j)) \\
&=&\prod_{j\not\in\alpha} \E(g_j(X_j))  \prod_{j\in\alpha} \left(g_j(X_j) - \E(g_j(X_j))\right)\/.
\end{eqnarray*}
We write 
$$f_{T_i} = \sum_{ \alpha\ni i} f_\alpha \/$$
Then,
\begin{eqnarray*}
f_{T_i}& = &\sum_{\alpha\ni i} \prod_{j\not\in\alpha} \E(g_j(X_j))  \prod_{j\in\alpha} \left(g_j(X_j) - \E(g_j(X_j))\right)\\
&=&(g_i(X_i) - \E(g_i(X_i)) \sum_{\gamma \subset \{1\/,\ldots \/,k\}\setminus \{i\}} \prod_{j\not\in\gamma} \E(g_j(X_j))  \prod_{j\in\gamma} \left(g_j(X_j) - \E(g_j(X_j))\right)\/.
\end{eqnarray*}
Now, 
\begin{eqnarray*}
\lefteqn{\sum_{\gamma \subset \{1\/,\ldots \/,k\}\setminus \{i\}} \prod_{j\not\in\gamma} \E(g_j(X_j))  \prod_{j\in\gamma} \left(g_j(X_j) - \E(g_j(X_j))\right)}\\
&&=\prod_{j\in \{1\/,\ldots \/,k\}\setminus \{i\}}\left(g_j(X_j) - \E(g_j(X_j))+\E(g_j(X_j))\right)\\
&&= \prod_{j\in \{1\/,\ldots \/,k\}\setminus \{i\}}g_j(X_j) \/.
\end{eqnarray*}
So that, finally,
\begin{equation}\label{eq:f_total}
f_{T_i} (X)= \left(g_i(X_i)-\E(g_i(X_i)\right) \prod_{j\neq i} g_j(X_j) \/.
\end{equation}
We denote by $f_\alpha^*$ the functions involved in the Sobol decomposition of $f(X^*)$ and by $f_{T_i}^*$ the sum of the $f_\alpha^*$'s over the $\alpha$ for which $i\in\alpha$. Then,
$$ f_{T_i} ^*(X^*)= \left(g_i(X_i^*)-\E(g_i(X_i^*)\right) \prod_{j\neq i} g_j(X_j) \/,$$
and 
$$S_{T_i} = \frac{\var(f_{T_i}(X))}{\var(f(X))} \ \mbox{and} \ S_{T_i}^* = \frac{\var(f_{T_i}^*(X^*))}{\var(f(X^*))}\/.$$
As in the proof of Theorem \ref{theo:nointer}, this may be rewritten as
\begin{eqnarray}\label{eq:sti}
S_{T_i} &= &\left[1+ \frac{\displaystyle \sum_{\alpha\not\ni i}\var( f_\alpha(X_\alpha)}{\displaystyle \var(f_{T_i}(X))}\right]^{-1} \ \mbox{and} \\
 S_{T_i}^* &= &\left[1+ \frac{\displaystyle \sum_{\alpha\not\ni i}\var( f_\alpha^*(X_\alpha)}{\displaystyle \var(f_{T_i}^*(X^*))}\right]^{-1}\/.\nonumber
\end{eqnarray}
We have 
$$\var(f_{T_i}(X)) = \var(g_i(X_i)) \prod_{j\neq i} \E(g_j(X_j)^2)  $$
and
$$\var(f_{T_i}^*(X^*)) = \var(g_i(X_i^*)) \prod_{j\neq i} \E(g_j(X_j)^2)\/.$$
Also, if $i\not\in\alpha$,
$$\var f_\alpha(X_\alpha) = \E(g_i(X_i))^2 \var\left(\prod_{\stackrel{j\neq i}{j\not\in \alpha}} \E(g_j(X_j)) \prod_{j\in\alpha} (g_j(X_j)-\E(g_j(X_j)))\right)$$
and
$$\var f_\alpha^*(X_\alpha) = \E(g_i(X_i^*))^2 \var\left(\prod_{\stackrel{j\neq i}{j\not\in \alpha}} \E(g_j(X_j)) \prod_{j\in\alpha} (g_j(X_j)-\E(g_j(X_j)))\right).$$

So, the result follows from (\ref{eq:sti}) if 
$$\frac{\var (g_i(X_i^*))}{\E(g_i(X_i^*))^2} \leq \frac{\var (g_i(X_i))}{\E(g_i(X_i))^2}\/.$$
We have (see Proposition \ref{prop:ew})
\begin{eqnarray*}
&&\log g_i(X_i^*)\leq_{\mbox{disp}} \log g_i(X_i)  \Longleftrightarrow  g_i(X_i^*)\leq_*g_i(X_i) \\
& \text{ so that } & g_i(X_i^*)\leq_{\mbox{Lorenz}}g_i(X_i)  \ \Longrightarrow \ \frac{\var (g_i(X_i^*))}{\E(g_i(X_i^*))^2} \leq \frac{\var (g_i(X_i))}{\E(g_i(X_i))^2}\/
\end{eqnarray*}
and thus $S_{T_i}^*\leq S_{T_i}$. When $j\neq i$, we have:
$$\var(f_{T_j}(X)) = \E(g_i(X_i)^2) \var(g_j(X_j)) \prod_{p\not\in\{i\/,j\}} \E(g_p(X_p)^2) $$
and 
\begin{eqnarray*}
\var(f(X)) &= &\sum_{\alpha\subset\{1\/,\ldots\/,k\}\/, \alpha\not=\varnothing}\var(f_\alpha(X_\alpha)) \\
&=& \E(g_i^2(X_i))\sum_{\alpha\subset\{1\/,\ldots\/,k\}\setminus\{i\}\/, \alpha\not=\varnothing}\prod_{p\not\in\alpha} \E(g_p(X_p))^2\prod_{p\in\alpha} \var(g_\alpha(X_\alpha)) \\
&&+ \var(g_i(X_i)) \prod_{p\neq i} \E(g_p(X_p))^2\/.
\end{eqnarray*}
So that $S_{T_j}^*\geq S_{T_j}$ if 
$$\frac{\E(g_i(X_i^*)^2)}{\E(g_i(X_i^*))^2} \leq \frac{\E(g_i(X_i)^2)}{\E(g_i(X_i))^2} $$
which holds as above because $g_i(X_i^*)\leq_{\mbox{Lorenz}}g_i(X_i)$.
\end{proof}
The conditions on stochastic ordering between $X_i$ and $X_i^*$, and  on the log-convexity are necessary, as can be seen with the two counter-examples below.
\begin{ex}
To see the necessarity of the stochastic ordering, one can consider:
\[ f(X_1, X_2, X_3) = \exp(\exp(X_1)) \exp(X_2) \exp(X_3), \]
with $X_1$, $X_2$ and $X_3$ uniform on $[0,1]$. If  $X_1^*$ is uniform on $[1,1.9]$, then $X_1^*\leq_{\mbox{disp}} X_1$ but $X_1\leq_{\mbox{st}} X_1^*$ and it can be easily checked that $S_{T_1}^* > S_{T_1}$ ($S_{T_1}^* \approx 0.90$ and $S_{T_1} \approx 0.65$).  
\end{ex}

\begin{ex}
The log-convexity of $g_i$ is also necessary. Indeed, take:
\[ f(X_1, X_2, X_3) = g_1(X_1) X_2 X_3 \]
where 
\[ g_1(x) = \left\{ \begin{array}{ll} 0 & \text{ if } x<0.45, \\ x/10  & \text{ if }   0.45 \leq x \leq 0.5, \\ x & \text{ else. }\end{array} \right. \]
and $X_2,X_3$ uniform on $[2,3]$, $X_1$ uniform on $[0,1]$, $X_1^*$ uniform on $[0,0.5]$ so that $X_1^*\leq_{\mbox{disp}} X_1$, $X_1^*\leq_{\mbox{st}} X_1$ but $g_1$ is not log-convex. In that case, we have $S_{T_1}^* > S_{T_1}$ ($S_{T_1}^* \approx 0.99$ and $S_{T_1}\approx 0.97$).
\end{ex}

Now, we turn to the case where $f$ writes as a sum of product of convex and non decreasing functions of one variable, that is, there are: a finite set $A$ and convex and non decreasing functions $g_i^a$, $i\in\{1\/,\ldots\/,k\}$, $a\in A$, such that 
\begin{equation}\label{eq:phi_sum_prod}
f(X) = \sum_{a\in A} g_1^a(X_1) \times \cdots \times g_k^a(X_k) \/.
\end{equation}
\begin{prop}\label{prop:sum}
Assume that $f$ satisfies (\ref{eq:phi_sum_prod}). Then, for any $i\in\{1\/,\ldots\/,k\}$,
\begin{eqnarray} 
 f_i(X_i) &=&\sum_{a\in A} \left[(g_i^a(X_i)-\E(g_i^a(X_i))\prod_{j\neq i}\E(g_j^a(X_j))\right]\\
 f_{T_i}(X) &=& \sum_{a\in A} \left[(g_i^a(X_i)-\E(g_i^a(X_i)))\prod_{j\neq i} g_j^a(X_j)\right]\\
 \var(f_{T_i})&=&\sum_{a\/,b\in A} \cov(g_i^a(X_i)\/,g_i^b(X_i)) \prod_{j\neq i} \E(g_j^a(X_j) g_j^b(X_j)) \/.
 \end{eqnarray}
\end{prop}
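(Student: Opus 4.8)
The plan is to establish the three identities in order, each one feeding into the next. The overall strategy mirrors the computation already carried out in the proof of Theorem \ref{theo:product}, but now with the product replaced by a finite sum of products indexed by $A$; since the Hoeffding decomposition is linear in $f$, I would simply apply the single-product formulas term by term in $a\in A$ and then recombine.

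First I would prove the formula for $f_i(X_i)$. By linearity of $f\mapsto f_i$ (which follows from \eqref{eq:fi}, since conditional expectation and the subtraction of $f_\varnothing$ are both linear), it suffices to treat each summand $g_1^a(X_1)\times\cdots\times g_k^a(X_k)$ separately. For a single product, independence of the $X_j$'s gives $\E\big(\prod_j g_j^a(X_j)\mid X_i\big)=g_i^a(X_i)\prod_{j\neq i}\E(g_j^a(X_j))$, and subtracting its mean $\prod_j \E(g_j^a(X_j))$ yields exactly $(g_i^a(X_i)-\E(g_i^a(X_i)))\prod_{j\neq i}\E(g_j^a(X_j))$. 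Summing over $a$ gives the stated identity. Here one should note that the sums are finite, so interchanging $\sum_{a\in A}$ with the (linear) operations is unproblematic.

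Next, for $f_{T_i}(X)=\sum_{\alpha\ni i}f_\alpha(X_\alpha)$, I would again invoke linearity: \eqref{eq:f_alpha} expresses each $f_\alpha$ as a signed combination of conditional expectations $\E(f(X)\mid X_\beta)$, hence $f\mapsto f_\alpha$ and therefore $f\mapsto f_{T_i}$ are linear. Applying the single-product computation from the proof of Theorem \ref{theo:product} — which gave $f_{T_i}=(g_i(X_i)-\E(g_i(X_i)))\prod_{j\neq i}g_j(X_j)$ via the telescoping identity $\sum_{\gamma\subset\{1,\dots,k\}\setminus\{i\}}\prod_{j\notin\gamma}\E(g_j(X_j))\prod_{j\in\gamma}(g_j(X_j)-\E(g_j(X_j)))=\prod_{j\neq i}g_j(X_j)$ — to each summand $\prod_j g_j^a(X_j)$ and summing over $a$ produces the second identity. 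I would spell out that the telescoping step is purely algebraic and does not use any property of the $g_j^a$ beyond integrability, so it applies verbatim to each $a$.

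Finally, for $\var(f_{T_i})$, I would expand $\var(f_{T_i})=\E(f_{T_i}^2)-\E(f_{T_i})^2$. From the second identity, $f_{T_i}=\sum_{a\in A}(g_i^a(X_i)-\E(g_i^a(X_i)))\prod_{j\neq i}g_j^a(X_j)$ is a sum of centered-in-$X_i$ terms, so $\E(f_{T_i})=0$ and $\var(f_{T_i})=\E(f_{T_i}^2)$. Squaring the finite sum gives a double sum over $a,b\in A$ of $\E\big[(g_i^a(X_i)-\E(g_i^a(X_i)))(g_i^b(X_i)-\E(g_i^b(X_i)))\prod_{j\neq i}g_j^a(X_j)g_j^b(X_j)\big]$; using independence of $X_i,X_i^{-i}$ and of the $X_j$, $j\neq i$, this factorizes as $\cov(g_i^a(X_i),g_i^b(X_i))\prod_{j\neq i}\E(g_j^a(X_j)g_j^b(X_j))$, which is the claimed expression. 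The only real subtlety — and the step I would flag as the main obstacle — is the bookkeeping of independence when factorizing the expectation of the product over $j\neq i$ of $g_j^a(X_j)g_j^b(X_j)$: one must be careful that the factor at index $j$ depends only on $X_j$ (true, since both the $a$- and $b$-copies are evaluated at the same $X_j$), so mutual independence of $X_1,\dots,X_k$ lets the expectation split as a product over $j$, and $X_i$ separates from the rest. I would also remark that finiteness of $A$ together with square-integrability of each $g_j^a(X_j)$ guarantees all the expectations and the variance decomposition make sense.
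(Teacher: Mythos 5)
Your proof is correct and follows essentially the same route as the paper, which simply invokes the computations from the proof of Theorem \ref{theo:product}: you apply the single-product formulas term by term via linearity of the Hoeffding decomposition and then factorize the variance using independence, exactly the intended ``straightforward'' argument. The only difference is that you spell out the details (telescoping identity, $\E(f_{T_i})=0$, the covariance factorization), which the paper leaves implicit.
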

\begin{proof}
The proof uses in a straightforward way the computations done in the proof of Theorem \ref{theo:product}. 
\end{proof}
We deduce the two following extensions  of Theorem \ref{theo:product}.
\begin{prop}\label{prop:disj}
Let $\{I_a\}_{a\in A}$ be a partition of $\{1\/,\ldots\/,k\}$ and assume that 
$$f(X) = \sum_{a\in A} \prod_{j\in I_a} g_j(X_j)$$
where the $g_j$'s are real-valued functions. Let $X_i^*$ be a random variable independent of $X$ and assume that $X_i^*\leq_{\mbox{disp}} X_i$ and $X_i^*\leq_{\mbox{st}} X_i$. If $\log g_i$ is a non decreasing convex function or a non increasing concave function, then $S_{T_i}^*\leq S_{T_i}$ and $S_{T_j}^*\geq S_{T_j}$, for $j\neq i$.
\end{prop}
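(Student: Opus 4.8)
The plan is to reduce everything to the single–product situation already treated in Theorem \ref{theo:product}, exploiting that the blocks $I_a$ are pairwise disjoint. Let $a_0$ be the unique index with $i\in I_{a_0}$ and set $B:=I_{a_0}$, $\Phi(X_B):=\prod_{j\in B}g_j(X_j)$ and $R:=\sum_{a\neq a_0}\prod_{j\in I_a}g_j(X_j)$, so that $f(X)=\Phi(X_B)+R$. The crucial observation is that $R$ is a function of $(X_j)_{j\notin B}$ only; in particular it does not involve $X_i$ and is left unchanged when $X_i$ is replaced by $X_i^*$. Since $X_B$ and $(X_j)_{j\notin B}$ are independent, $\var(f(X))=\var(\Phi(X_B))+\var(R)$ and likewise $\var(f(X^*))=\var(\Phi(X_B^*))+\var(R)$, with the same $\var(R)$.

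Next I would isolate the dependence on $X_i$ in the Sobol decomposition. Applying Proposition \ref{prop:sum} with $g_j^a:=g_j$ if $j\in I_a$ and $g_j^a\equiv 1$ otherwise (equivalently, redoing the computation of Theorem \ref{theo:product} inside the block $B$), one finds that $f_\alpha\equiv 0$ unless $\alpha$ is contained in a single block, that
\[
f_{T_i}(X)=(g_i(X_i)-\E(g_i(X_i)))\prod_{j\in B\setminus\{i\}}g_j(X_j),
\]
and that for $\alpha\not\ni i$, $\alpha\neq\varnothing$, either $\alpha\subset B\setminus\{i\}$, in which case $\var(f_\alpha)$ carries the factor $\E(g_i(X_i))^2$ and is otherwise free of $X_i$, or $\alpha\subset I_a$ for some $a\neq a_0$, in which case $\var(f_\alpha)$ does not involve $X_i$ at all, these last terms adding up to $\var(R)$. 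Hence, with $N:=\prod_{j\in B\setminus\{i\}}\E(g_j(X_j)^2)>0$,
\[
S_{T_i}=\left[1+\frac{\E(g_i(X_i))^2\,M+\var(R)}{\var(g_i(X_i))\,N}\right]^{-1},
\]
where $M\geq 0$, $N$ and $\var(R)\geq 0$ do not depend on $X_i$, and the same formula holds for $S_{T_i}^*$ with $g_i(X_i)$ replaced by $g_i(X_i^*)$.

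It then remains to check that
\[
\frac{\E(g_i(X_i^*))^2\,M+\var(R)}{\var(g_i(X_i^*))}\ \geq\ \frac{\E(g_i(X_i))^2\,M+\var(R)}{\var(g_i(X_i))}.
\]
The contribution of $M$ is handled exactly as in Theorem \ref{theo:product}: since $\log g_i$ is non decreasing convex or non increasing concave and $X_i^*\leq_{\mbox{disp}}X_i$, $X_i^*\leq_{\mbox{st}}X_i$, Proposition \ref{prop:ew}(4) gives $\log g_i(X_i^*)\leq_{\mbox{disp}}\log g_i(X_i)$, hence $g_i(X_i^*)\leq_* g_i(X_i)$ by (3) and $g_i(X_i^*)\leq_{\mbox{Lorenz}}g_i(X_i)$ by (2), i.e. $\E(g_i(X_i^*))^2/\var(g_i(X_i^*))\geq\E(g_i(X_i))^2/\var(g_i(X_i))$. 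The contribution of $\var(R)$ is new compared with Theorem \ref{theo:product}: it additionally requires the plain inequality $\var(g_i(X_i^*))\leq\var(g_i(X_i))$, which I would derive from Proposition \ref{prop:ew}(4) and (1) applied to $g_i$ itself, using that $g_i=\exp\circ(\log g_i)$ inherits the monotonicity and convexity of $\log g_i$. Combining the two inequalities yields $S_{T_i}^*\leq S_{T_i}$.

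For $j\neq i$ the argument is parallel: if $j$ lies in a block other than $B$, then $f_{T_j}$ is independent of $X_i$ and $S_{T_j}^*\geq S_{T_j}$ reduces to $\var(f(X^*))\leq\var(f(X))$, i.e. to $\var(\Phi(X_B^*))\leq\var(\Phi(X_B))$, which follows from the same ingredients ($\var(g_i(X_i^*))\leq\var(g_i(X_i))$ together with the appropriate sign of $\E(g_i(X_i^*))-\E(g_i(X_i))$ dictated by the monotonicity of $g_i$); if $j\in B$, one repeats the computation above, the factor $\E(g_i(X_i))^2$ being replaced by $\E(g_i(X_i)^2)$ in the relevant variances. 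I expect the main obstacle to be the same one as in Theorem \ref{theo:product} — pushing the dispersive order through $\exp$ via Proposition \ref{prop:ew}(4) so as to control $g_i$ from $\log g_i$ — now sharpened by the additive term $\var(R)$ coming from the other blocks, which is harmless precisely when $\var(g_i(X_i^*))\leq\var(g_i(X_i))$.
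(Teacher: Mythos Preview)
Your reduction to the block $B=I_{a_0}$ containing $i$, the computation of $f_{T_i}$ via Proposition \ref{prop:sum}, and the splitting of $\sum_{\alpha\not\ni i}\var(f_\alpha)$ into a piece carrying the factor $\E(g_i(X_i))^2$ plus the constant $\var(R)$ are precisely what the paper has in mind: its proof is the single sentence ``follows that of Theorem \ref{theo:product} because the $I_a$ are disjoint,'' and your write-up is a faithful (and more explicit) unpacking of it. You are also right that, compared with Theorem \ref{theo:product}, the additive term $\var(R)$ forces, on top of the Lorenz comparison, the plain inequality $\var(g_i(X_i^*))\leq\var(g_i(X_i))$.

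There is, however, a genuine gap in how you obtain that last inequality. You argue that $g_i=\exp\circ(\log g_i)$ ``inherits the monotonicity and convexity of $\log g_i$'' and then apply Proposition \ref{prop:ew}(4) to $g_i$ itself. This works when $\log g_i$ is non decreasing and convex, since then $g_i$ is indeed non decreasing convex. But it breaks in the other branch of the hypothesis: if $\log g_i$ is non increasing and concave, $g_i$ is non increasing but \emph{not} concave in general (e.g.\ $\log g_i(x)=-x$ gives $g_i(x)=e^{-x}$, which is convex), so Proposition \ref{prop:ew}(4) is inapplicable to $g_i$. In fact the desired variance inequality can fail outright: with $g_i(x)=e^{-x^2}$, $X_i^*\sim\UU[1,2]$, $X_i\sim\UU[1,3]$ one has $X_i^*\leq_{\mbox{disp}}X_i$, $X_i^*\leq_{\mbox{st}}X_i$, $\log g_i$ non increasing concave on $[1,3]$, yet $\var(g_i(X_i^*))\approx0.0102>0.0094\approx\var(g_i(X_i))$. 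The same difficulty contaminates your treatment of $S_{T_j}$ for $j\neq i$: when $j\notin B$ you need $\var(\Phi(X_B^*))\leq\var(\Phi(X_B))$, and your ``appropriate sign of $\E(g_i(X_i^*))-\E(g_i(X_i))$'' goes the wrong way when $g_i$ is non increasing. In short, your argument is complete and matches the paper in the case where $\log g_i$ is non decreasing convex, but the non increasing concave case needs an additional idea (or a restriction of the hypothesis) that neither your proposal nor the paper's one-line proof supplies.
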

\begin{proof}
The proof follows that of Theorem \ref{theo:product} in a straighforward way because the $I_a$ are disjoints. 
\end{proof}
\begin{prop}\label{prop:sum2}
Let $f(X) = \varphi_1(X_i)\displaystyle \prod_{j\neq i} g_j(X_j) + \varphi_2(X_i)$ with $\log \varphi_1$ and $\log\varphi_2$ non decreasing and convex. If 
\begin{itemize}
\item $X_i^*$ is  independent of $X$,  $X_i^*\leq_{\mbox{disp}} X_i$ and $X_i^*\leq_{\mbox{st}} X_i$. 
\item $\displaystyle\frac{\var(\varphi_2(X_i^*))}{\E(\varphi_1(X_i^*))^2}\leq \frac{\var(\varphi_2(X_i))}{\E(\varphi_1(X_i))^2}$ and $\displaystyle \frac{\cov(\varphi_1(X_i^*)\/,\varphi_2(X_i^*))}{\E(\varphi_1(X_i^*))^2}\leq \frac{\cov(\varphi_1(X_i)\/,\varphi_2(X_i))}{\E(\varphi_1(X_i))^2}$.
\end{itemize}
Then $S_{T_i}^*\leq S_{T_i}$.
\end{prop}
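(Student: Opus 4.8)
The plan is to mimic the structure of the proof of Theorem \ref{theo:product}, exploiting the fact that $f$ here is a product of one-variable functions plus an additive term in the same variable $X_i$. First I would reduce, via Hoeffding's decomposition, the total index $S_{T_i}$ to the ratio $\left[1+\frac{\sum_{\alpha\not\ni i}\var(f_\alpha(X_\alpha))}{\var(f_{T_i}(X))}\right]^{-1}$ exactly as in (\ref{eq:sti}). The point is that the term $\varphi_2(X_i)$ only affects the summands $f_\alpha$ with $i\in\alpha$: for $\alpha\not\ni i$, the function $f_\alpha$ is the same as for the pure product $\varphi_1(X_i)\prod_{j\neq i}g_j(X_j)$, and hence $\var(f_\alpha(X_\alpha))=\E(\varphi_1(X_i))^2\,c_\alpha$ for a constant $c_\alpha$ depending only on $X^{-i}$ (compare the formulas for $\var f_\alpha(X_\alpha)$ in the proof of Theorem \ref{theo:product}); the same holds with $X_i$ replaced by $X_i^*$, with the same $c_\alpha$.

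Next I would compute $f_{T_i}$. Since $f_{T_i}=\sum_{\alpha\ni i}f_\alpha$ is linear in $f$, we get $f_{T_i}(X)=(\varphi_1(X_i)-\E(\varphi_1(X_i)))\prod_{j\neq i}g_j(X_j)+(\varphi_2(X_i)-\E(\varphi_2(X_i)))$, using (\ref{eq:f_total}) for the product part and the fact that a one-variable function of $X_i$ contributes only to $f_i$. Taking variances and using independence,
\begin{equation*}
\var(f_{T_i}(X)) = \var(\varphi_1(X_i))\prod_{j\neq i}\E(g_j(X_j)^2) + \var(\varphi_2(X_i)) + 2\cov(\varphi_1(X_i),\varphi_2(X_i))\prod_{j\neq i}\E(g_j(X_j)),
\end{equation*}
and the analogous identity for $\var(f_{T_i}^*(X^*))$ with $X_i$ replaced by $X_i^*$. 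Dividing numerator and denominator of the ratio in $S_{T_i}$ by $\E(\varphi_1(X_i))^2$ (resp. $\E(\varphi_1(X_i^*))^2$), the terms $\sum_{\alpha\not\ni i}\var(f_\alpha(X_\alpha))/\E(\varphi_1(X_i))^2=\sum_{\alpha\not\ni i}c_\alpha$ are identical for the starred and unstarred quantities, so it suffices to show that
\begin{equation*}
\frac{\var(f_{T_i}^*(X^*))}{\E(\varphi_1(X_i^*))^2} \ \leq\ \frac{\var(f_{T_i}(X))}{\E(\varphi_1(X_i))^2}.
\end{equation*}

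Finally I would verify this last inequality term by term. The product factors $\prod_{j\neq i}\E(g_j(X_j)^2)$ and $\prod_{j\neq i}\E(g_j(X_j))$ are unchanged, and are nonnegative, so it is enough to bound the three normalized quantities $\var(\varphi_1(X_i))/\E(\varphi_1(X_i))^2$, $\var(\varphi_2(X_i))/\E(\varphi_1(X_i))^2$, $\cov(\varphi_1(X_i),\varphi_2(X_i))/\E(\varphi_1(X_i))^2$ from below by their starred counterparts. For the first, since $\log\varphi_1$ is non decreasing convex and $X_i^*\leq_{\mbox{disp}}X_i$, $X_i^*\leq_{\mbox{st}}X_i$, Proposition \ref{prop:ew} (parts (3) and (2)) gives $\varphi_1(X_i^*)\leq_{\mbox{Lorenz}}\varphi_1(X_i)$, hence $\var(\varphi_1(X_i^*))/\E(\varphi_1(X_i^*))^2\leq\var(\varphi_1(X_i))/\E(\varphi_1(X_i))^2$; the remaining two are precisely the two hypotheses assumed in the statement. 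One must also keep track of signs: $\prod_{j\neq i}\E(g_j(X_j))$ should be understood as nonnegative (or the convexity/monotonicity hypotheses on the $g_j$ ensure it), so that the covariance term enters with a fixed sign and the term-by-term comparison is legitimate. The main obstacle, and the reason the two extra hypotheses are imposed by hand rather than derived, is exactly this: unlike $\var(\varphi_1(X_i))/\E(\varphi_1(X_i))^2$, the quantities $\var(\varphi_2(X_i))/\E(\varphi_1(X_i))^2$ and $\cov(\varphi_1(X_i),\varphi_2(X_i))/\E(\varphi_1(X_i))^2$ mix two different functions of $X_i$ normalized by a third, and no single stochastic-order relation controls them, so they cannot be reduced to the Lorenz-order machinery and must be assumed.
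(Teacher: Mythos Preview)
Your proof is correct and follows essentially the same route as the paper's: compute $f_\alpha$ for $\alpha\not\ni i$ (only the product term contributes, giving a factor $\E(\varphi_1(X_i))^2$), compute $\var(f_{T_i})$ from the decomposition $f_{T_i}(X)=(\varphi_1(X_i)-\E\varphi_1(X_i))\prod_{j\neq i}g_j(X_j)+(\varphi_2(X_i)-\E\varphi_2(X_i))$, plug into (\ref{eq:sti}), divide by $\E(\varphi_1(X_i))^2$, and compare term by term using the Lorenz order for the $\varphi_1$ term and the two assumed inequalities for the others. Your expression for $\var(f_{T_i})$ (with $\prod_{j\neq i}\E(g_j(X_j)^2)$ and the factor $2$ on the covariance) is in fact the correct one coming from Proposition~\ref{prop:sum}; the paper's displayed formula has minor typos there, but the argument is unchanged.
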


\begin{proof}
 Proposition \ref{prop:sum} gives
 \begin{eqnarray*}
 \var (f_{T_i}(X))& =&\var(\varphi_1(X_i))\prod_{j\neq i} \E(g_j(X_j))^2+ \var(\varphi_2(X_i))\\
 &&+ \cov(\varphi_1(X_i)\/,\varphi_2(X_i))\prod_{j\neq i} \E(g_j(X_j))
 \end{eqnarray*}
 and if $i\not\in \alpha$, 
 $$f_\alpha(X_\alpha) = \E(\varphi_1(X_i)) \prod_{j\not\in\alpha} \E(g_j(X_j)) \times \prod_{j\in\alpha} (g_j(X_j)-\E(g_j(X_j)))\/,$$
 $$\var (f_\alpha(X_\alpha)) = \E(\varphi_1(X_i))^2 \prod_{j\not\in \alpha}\E(g_j(X_j))^2 \times \var\left(\prod_{j\in\alpha}(g_j(X_j)-\E(g_j(X_j)))\right)\/.$$
  We use once more (\ref{eq:sti}) to get that $S_{T_i}^* \leq S_{T_i}$ if and only if
 {\small\begin{eqnarray*}
 \lefteqn{\frac{\displaystyle \var(\varphi_1(X_i^*))\prod_{j\neq i} \E(g_j(X_j))^2+ \var(\varphi_2(X_i^*))+ \cov(\varphi_1(X_i^*)\/,\varphi_2(X_i^*))\prod_{j\neq i} \E(g_j(X_j))}{\displaystyle \E(\varphi_1(X_i^*))^2 \sum_{\alpha \not\ni i} \prod_{j\not\in \alpha}\E(g_j(X_j))^2 \times \var\left(\prod_{j\in\alpha}(g_j(X_j)-\E(g_j(X_j)))\right)}}\\
&\leq & \frac{\displaystyle \var(\varphi_1(X_i))\prod_{j\neq i} \E(g_j(X_j))^2+ \var(\varphi_2(X_i))+ \cov(\varphi_1(X_i)\/,\varphi_2(X_i))\prod_{j\neq i} \E(g_j(X_j))}{\displaystyle \E(\varphi_1(X_i^*))^2 \sum_{\alpha \not\ni i} \prod_{j\not\in \alpha}\E(g_j(X_j))^2 \times \var\left(\prod_{j\in\alpha}(g_j(X_j)-\E(g_j(X_j)))\right)}\/.
\end{eqnarray*}}
With our hypothesis, we have that
$$\frac{\var(\varphi_1(X_i^*))}{\E(\varphi_1(X_i^*))^2}\leq \frac{\var(\varphi_1(X_i))}{\E(\varphi_1(X_i))^2} \/,$$
$$\displaystyle\frac{\var(\varphi_2(X_i^*))}{\E(\varphi_1(X_i^*))^2}\leq \frac{\var(\varphi_2(X_i))}{\E(\varphi_1(X_i))^2}$$
and
$$\displaystyle \frac{\cov(\varphi_1(X_i^*)\/,\varphi_2(X_i^*))}{\E(\varphi_1(X_i^*))^2}\leq \frac{\cov(\varphi_1(X_i)\/,\varphi_2(X_i))}{\E(\varphi_1(X_i))^2}\/.$$
This leads to the announced result.
\end{proof}

%

The second condition in Proposition \ref{prop:sum2} is very technical and unsatisfactory. Nevertheless, very simple counter-examples exist as it can be seen below. 
\begin{ex}\label{counterex}
Let $X_i\sim \UU([1.5\/,3.5])$, $X_i^*\sim\UU([0\/,1.8])$, $\varphi_1(x) = \exp(x^2)$ and $\varphi_2(x) = \exp(x)$, $g_j(x) = 1$, $j\neq i$. Then  one can show that  $X_i^*\leq_{\mbox{disp}} X_i$ and $X_i^*\leq_{\mbox{st}} X_i$. However, 
\begin{eqnarray*}
\frac{\var(\varphi_2(X_i^*))}{\E(\varphi_1(X_i^*))^2} &\sim & 0.08\\
\frac{\var(\varphi_2(X_i))}{\E(\varphi_1(X_i))^2} &\sim& 10^{-7}\\
\frac{\cov(\varphi_1(X_i^*)\/,\varphi_2(X_i^*))}{\E(\varphi_1(X_i^*))^2} &\sim & 0.3\\
\frac{\cov(\varphi_1(X_i)\/,\varphi_2(X_i))}{\E(\varphi_1(X_i))^2}&\sim & 10^{-3}\/.
\end{eqnarray*}
and, from the proof of  Proposition \ref{prop:sum2}, $S_{T_i}^*> S_{T_i}$.
\end{ex}

The following result derived in \cite{cov} is mentioned here as a related result on excess wealth orders, even if it is not sufficient to obtain a more general version of Proposition \ref{prop:sum2}. 
\begin{prop}\label{prop:cov}[Corollary 3.2 in \cite{cov}]
Let $X$ and $Y$ be two finite means random variables with supports bounded from below by $\ell_X$ and $\ell_Y$ respectively. If $X\leq_{\mbox{ew}} Y$ and $\ell_X\leq \ell_Y$ then for all non decreasing and convex functions $h_1\/, h_2$ for which $h_i(X)$ and $h_i(Y)$ $i=1\/,2$ have order two moments, 
\begin{equation}\label{eq:cov}
\cov(h_1(X)\/, h_2(X))\leq \cov(h_1(Y)\/, h_2(Y)) \/.
\end{equation}
\end{prop}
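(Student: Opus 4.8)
The plan is to move to the quantile scale and use Hoeffding's covariance identity. Write $q_X=F_X^{-1}$, $q_Y=F_Y^{-1}$, and for nondecreasing $h$ let $\varphi_h^X=h\circ q_X$, $\varphi_h^Y=h\circ q_Y$ (the quantile functions of $h(X)$, $h(Y)$). For square-integrable $h_1(X),h_2(X)$,
\begin{equation}\label{eq:hoeff}
\cov(h_1(X),h_2(X))=\int_0^1\!\!\int_0^1 k(u,v)\,d\varphi_{h_1}^X(u)\,d\varphi_{h_2}^X(v),\qquad k(u,v):=(u\wedge v)\bigl(1-(u\vee v)\bigr),
\end{equation}
and similarly for $Y$; here $k$ is the Brownian-bridge covariance kernel, and since $h_1,h_2,q_X,q_Y$ are nondecreasing, every Stieltjes measure occurring here is nonnegative. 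Subtracting the two instances of \eqref{eq:hoeff} and writing $d\varphi_{h_1}^Y\,d\varphi_{h_2}^Y-d\varphi_{h_1}^X\,d\varphi_{h_2}^X = dD_1\,d\varphi_{h_2}^Y+d\varphi_{h_1}^X\,dD_2$ with $D_i:=\varphi_{h_i}^Y-\varphi_{h_i}^X$, one obtains
\begin{equation*}
\cov(h_1(Y),h_2(Y))-\cov(h_1(X),h_2(X))=\int_0^1 A_1(v)\,d\varphi_{h_2}^Y(v)+\int_0^1 A_2(u)\,d\varphi_{h_1}^X(u),
\end{equation*}
where $A_1(v)=\int_0^1 k(u,v)\,dD_1(u)$ and $A_2(u)=\int_0^1 k(u,v)\,dD_2(v)$. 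As the two outer measures are nonnegative, it is enough to show $A_1\geq 0$ and $A_2\geq 0$.

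The reason $A_i\geq 0$ combines two features of $k$: it vanishes at $0$ and at $1$, and $k(\cdot,v)$ is piecewise linear with slope $1-v$ on $(0,v)$ and slope $-v$ on $(v,1)$. Integration by parts (the boundary terms vanishing because $k$ vanishes at the endpoints) and the symmetry $k(u,v)=k(v,u)$ give
\begin{equation*}
A_1(v)=-(1-v)\int_0^v D_1+v\int_v^1 D_1=R_1(v)-(1-v)R_1(0),\qquad R_i(p):=\int_p^1 D_i(u)\,du,
\end{equation*}
and, in the same way, $A_2(u)=R_2(u)-(1-u)R_2(0)$. It then remains to prove $R_i(p)\geq(1-p)R_i(0)$ for $p\in(0,1)$ and $R_i(0)\geq 0$, $i=1,2$.

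For this, by Proposition \ref{prop:ew}(5) the hypotheses $X\leq_{\mbox{ew}}Y$ and $-\infty<\ell_X\leq\ell_Y$ yield $h_i(X)\leq_{\mbox{ew}}h_i(Y)$. Translating the definition of the excess wealth order to quantiles, $h_i(X)\leq_{\mbox{ew}}h_i(Y)$ says precisely that $R_i(p)\geq(1-p)D_i(p)$ for all $p\in(0,1)$; since $D_i=-R_i'$, this is equivalent (an elementary computation) to $p\mapsto R_i(p)/(1-p)$ being nondecreasing, hence $R_i(p)/(1-p)\geq R_i(0)$. Finally, letting $p\to 0^+$ in the excess-wealth inequality gives $\E(h_i(X))-h_i(\ell_X)\leq\E(h_i(Y))-h_i(\ell_Y)$, and $h_i(\ell_X)\leq h_i(\ell_Y)$ because $h_i$ is nondecreasing and $\ell_X\leq\ell_Y$; therefore $R_i(0)=\E(h_i(Y))-\E(h_i(X))\geq 0$. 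Combining, $A_i=R_i(\cdot)-(1-\cdot)R_i(0)\geq 0$, and feeding this back, the right-hand side above is a sum of integrals of nonnegative functions against nonnegative measures, hence $\geq 0$, which is \eqref{eq:cov}.

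I expect the real work to be technical bookkeeping rather than conceptual: (i) justifying the integration by parts and the vanishing of the boundary terms when the supports are unbounded above, which is exactly where the order-two-moment assumption on $h_i(X),h_i(Y)$ enters, since it forces $(1-u)D_i(u)\to 0$ as $u\to 1$; (ii) carefully verifying the quantile reformulation of the excess wealth order and checking that $\ell_X\leq\ell_Y$ is precisely what upgrades "$R_i/(1-\cdot)$ nondecreasing" to "$R_i\geq 0$"; and (iii) making sure the hypotheses of Proposition \ref{prop:ew}(5) (finiteness of the relevant left endpoints, and any continuity it requires) are preserved after composing with $h_i$. The conceptual core is simply that Hoeffding's kernel $k$ vanishes at $0$ and $1$, so pairing $dD_i$ against it reproduces exactly the excess-wealth (right-spread) functional comparing $h_i(X)$ with $h_i(Y)$.
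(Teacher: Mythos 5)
The paper never proves this proposition: it is quoted as Corollary 3.2 of \cite{cov} and stated without proof, so there is no internal argument to compare yours against. Judged on its own, your proof is correct and complete in structure. The quantile-scale Hoeffding identity with the bridge kernel $k(u,v)=u\wedge v-uv$, the telescoping of the difference of product measures into $dD_1\,d\varphi_{h_2}^Y+d\varphi_{h_1}^X\,dD_2$, the integration by parts (boundary terms vanish because $k$ vanishes at $0$ and $1$ and $(1-u)D_i(u)\to 0$ under the moment assumptions), and the identification of $R_i(p)-(1-p)D_i(p)$ with the difference of the right-spread (excess wealth) transforms of $h_i(Y)$ and $h_i(X)$ are all accurate; the reduction of $A_i\geq 0$ to the monotonicity of $p\mapsto R_i(p)/(1-p)$ is also sound, and the absolute convergence needed for Fubini follows from the second-moment hypothesis via the positive semidefiniteness of $k$. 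Two remarks. First, your separate verification that $R_i(0)\geq 0$ is superfluous: $A_i(v)=R_i(v)-(1-v)R_i(0)\geq 0$ already follows by letting $p\to 0^+$ in the monotonicity of $R_i(p)/(1-p)$, so the limiting argument with $h_i(\ell_X)\leq h_i(\ell_Y)$ is harmless but not needed. Second, the hypotheses $X\leq_{\mbox{ew}}Y$ and $-\infty<\ell_X\leq\ell_Y$ enter your proof only through Proposition \ref{prop:ew}(5) to obtain $h_i(X)\leq_{\mbox{ew}}h_i(Y)$; as stated in the paper (following \cite{ew_correct}), that preservation result assumes $X$ and $Y$ continuous, an assumption which does not appear in Proposition \ref{prop:cov}. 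So, strictly with the tools available in the paper, your argument proves the proposition for continuous $X$ and $Y$; for the statement in full generality you would need the preservation theorem in a version without the continuity hypothesis (or an argument replacing that step), which is worth flagging explicitly.
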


\section{Examples}\label{sec:examples}
In this section,  we illustrate the previous results on some classical financial risk models. All considered models are associated with a set of parameters. In a context where these parameters are not known with certainty (due to estimation error for instance), the global sensibility analysis is useful to assess which (uncertain) input parameters mostly contribute to the uncertainty of model output and in turns, which parameters have to be estimated with caution.
In most of our examples, we will consider truncated distribution  functions (ordered with respect to the dispersive and stochastic orders). The use of truncated distribution is motivated by the fact that the distribution of financial parameters have generally bounded support. Let us first  recall the conditions under which some particular distribution functions are ordered with respect to the dispersive order. We refer to \cite{mul} for other classes of distribution functions.

\begin{prop}
Let $X$ and $Y$ be two random variables.
\begin{enumerate}
\item If $X \sim \UU[a,b] $  and $Y \sim \UU[c,d] $, then $X$ is smaller than $Y$ for the dispersive order ($X\leq_{\mbox{disp}} Y$) if and only if
$$b-a \leq d-c.$$ 
\item If $X \sim \mathcal{E}_{}(\mu) $ and $Y\sim \mathcal{E}_{}(\lambda)$, then $X$ is smaller than $Y$ for the dispersive order ($X\leq_{\mbox{disp}} Y$) if and only if
$$\lambda \leq \mu.$$
\item If $X \sim \mathcal{N}_{}(m_1,\sigma^2)$  and $Y\sim \mathcal{N}_{}(m_2,\nu^2)$,  then $X$ is smaller than $Y$ for the dispersive order ($X\leq_{\mbox{disp}} Y$) if and only if
$$ \sigma \leq \nu.$$ 
\end{enumerate}
\end{prop}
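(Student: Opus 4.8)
The plan is to apply the definition of the dispersive order directly, namely that $X\leq_{\mbox{disp}} Y$ holds if and only if $p\mapsto F_Y^{-1}(p)-F_X^{-1}(p)$ is non-decreasing on $]0,1[$. The unifying observation is that each of the three families is a location-scale family: in each case there is a fixed random variable $Z$, with continuous strictly increasing distribution function (hence strictly increasing quantile function $F_Z^{-1}$ on $]0,1[$), and constants $m_X\in\R$, $\sigma_X>0$, such that $X$ has the law of $\sigma_X Z+m_X$, and likewise $Y$ has the law of $\sigma_Y Z+m_Y$. Consequently $F_X^{-1}(p)=m_X+\sigma_X F_Z^{-1}(p)$ and $F_Y^{-1}(p)=m_Y+\sigma_Y F_Z^{-1}(p)$, so that
$$F_Y^{-1}(p)-F_X^{-1}(p)=(m_Y-m_X)+(\sigma_Y-\sigma_X)\,F_Z^{-1}(p)\/.$$
Since $F_Z^{-1}$ is strictly increasing, the right-hand side is non-decreasing in $p$ if and only if $\sigma_Y-\sigma_X\geq 0$. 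Thus the whole proposition reduces to identifying the scale parameter in each family and reading off the condition $\sigma_X\leq\sigma_Y$.

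It then remains to carry out this identification, which is immediate. For (1), if $X\sim\UU[a,b]$ then $X$ has the law of $(b-a)Z+a$ with $Z\sim\UU[0,1]$, so $\sigma_X=b-a$ and $\sigma_X\leq\sigma_Y$ reads $b-a\leq d-c$. For (2), writing $\mathcal{E}(\mu)$ for the exponential law of rate $\mu$, if $X\sim\mathcal{E}(\mu)$ then $X$ has the law of $\mu^{-1}Z$ with $Z\sim\mathcal{E}(1)$ and $F_Z^{-1}(p)=-\ln(1-p)$ strictly increasing on $]0,1[$; hence $\sigma_X=1/\mu$ and $\sigma_X\leq\sigma_Y$ reads $1/\mu\leq 1/\lambda$, i.e. $\lambda\leq\mu$. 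For (3), if $X\sim\mathcal{N}(m_1,\sigma^2)$ then $X$ has the law of $\sigma Z+m_1$ with $Z\sim\mathcal{N}(0,1)$ and $F_Z^{-1}=\Phi^{-1}$; hence the scale parameter is $\sigma$ and the condition is $\sigma\leq\nu$.

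I do not expect any serious obstacle: the argument is entirely elementary. The one point requiring a little care is the ``only if'' implication, where one deduces $\sigma_Y-\sigma_X\geq0$ from the monotonicity of $p\mapsto(m_Y-m_X)+(\sigma_Y-\sigma_X)F_Z^{-1}(p)$; this step genuinely uses that $F_Z^{-1}$ is non-constant (indeed strictly increasing, since in each of the three cases $Z$ has a density positive on an interval), so that the sign of the slope $\sigma_Y-\sigma_X$ is detectable. One could alternatively invoke the general fact that within a location-scale family the dispersive order coincides with the ordering of the scale parameters, but the direct computation above is short enough to include in full.
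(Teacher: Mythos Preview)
Your proof is correct. The paper itself does not prove this proposition: it merely recalls the three facts as known, with a reference to \cite{mul}, so there is no in-paper argument to compare against.

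Your unified location-scale argument is clean and is in fact the standard way one would establish all three cases simultaneously. The key computation $F_Y^{-1}(p)-F_X^{-1}(p)=(m_Y-m_X)+(\sigma_Y-\sigma_X)F_Z^{-1}(p)$ together with the strict monotonicity of $F_Z^{-1}$ gives both directions at once, and you correctly flag that the ``only if'' part requires $F_Z^{-1}$ to be non-constant. The identifications of the scale parameters in (1)--(3) are accurate; in particular your reading of $\mathcal{E}(\mu)$ as rate $\mu$ is the one consistent with the stated conclusion $\lambda\leq\mu$.
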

As mentioned above, most of the numerical illustrations will be based on model parameter with truncated distribution functions. We present some properties of such  distributions.
\begin{defi}
Let $X$ be a random variable with density function $f$ and $(a,b)\in \mathbb{R}^2$.  If $F$ denotes the cumulative distribution of $X$ then the truncated distribution of $X$ on the interval $[a,b]$ is the conditional distribution of $X$ given that $a<X \leq b$. The truncated density function of $X$ is then given by  
\begin{equation}
f(x|a<X \leq b)=\frac{g(x)}{F(b)-F(a)}
\end{equation}
where $g(x)=f(x)$ for all  $x$ such that $a<x \leq b$ and $g(x)=0$ else.
\end{defi}

 In what follows, we denote by $\mathcal{N}_{T}$ and $\mathcal{E}_{T}$ the  truncated normal and the truncated exponential laws respectively.
\begin{prop}
\label{quant}
 Let $X$ and $Y$ be two random variables
  \begin{enumerate}
 \item if $X\sim \mathcal{N}_{T}(m,\sigma^2)$ where $X$ is truncated on $[a,b] $ then  the quantile function of $X$ is given by 
 $$F_X^{-1}(x)=\phi^{-1}(\phi(\alpha) + x(\phi(\beta)-\phi(\alpha)))\sigma + m$$
 where $\alpha=\frac{a-m}{\sigma}$, $\beta=\frac{b-m}{\sigma}$ and where $\phi$ is the standard normal cumulative distribution function.
 \item if  $Y\sim \mathcal{E}_{T}(\lambda)$ is truncated on  $[a,b] $ then  the quantile function of $Y$ is given by 
  $$F_Y^{-1}(x)=-\frac{1}{\lambda}\log(e^{-\lambda a} + x(e^{-\lambda b}-e^{-\lambda a})) $$
  where $\lambda$ denotes the parameter of the exponential distribution.  
 \end{enumerate}
 
\end{prop}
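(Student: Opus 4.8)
The plan is to prove both formulas by the same elementary route: write down explicitly the cumulative distribution function of the truncated variable using the preceding definition of a truncated distribution, and then invert it. If $Z$ has cdf $F$ and is truncated on $[a,b]$, the cdf of the truncated variable is
$$G(t) = \frac{F(t) - F(a)}{F(b) - F(a)}, \qquad t\in[a,b],$$
and $G$ is continuous and strictly increasing on $[a,b]$ as soon as $F$ admits a density that is positive on $(a,b)$, which is the case both for the normal and for the exponential law. Hence $G$ is a bijection from $[a,b]$ onto $[0,1]$, so on $(0,1)$ the generalized inverse $F_Z^{-1}=G^{-1}$ coincides with the set-theoretic inverse of $G$, and it suffices to solve $G(t)=x$ for $t$.

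For (1), with $F$ the cdf of $\mathcal{N}(m,\sigma^2)$ one has $F(t)=\phi\!\left(\frac{t-m}{\sigma}\right)$, hence $F(a)=\phi(\alpha)$ and $F(b)=\phi(\beta)$, and the equation $G(t)=x$ becomes
$$\phi\!\left(\frac{t-m}{\sigma}\right) = \phi(\alpha) + x\big(\phi(\beta) - \phi(\alpha)\big).$$
For $x\in(0,1)$ the right-hand side lies in $(\phi(\alpha),\phi(\beta))\subset(0,1)$, so one may apply $\phi^{-1}$ and then solve the resulting affine equation in $t$, which yields exactly the claimed expression $F_X^{-1}(x)=\phi^{-1}\!\big(\phi(\alpha)+x(\phi(\beta)-\phi(\alpha))\big)\sigma + m$.

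For (2), with $F(t)=1-e^{-\lambda t}$ the cdf of $\mathcal{E}(\lambda)$ (and $0\le a<b$), one gets $F(b)-F(a)=e^{-\lambda a}-e^{-\lambda b}$ and $F(t)-F(a)=e^{-\lambda a}-e^{-\lambda t}$, so $G(t)=x$ rearranges to
$$e^{-\lambda t} = e^{-\lambda a} + x\big(e^{-\lambda b} - e^{-\lambda a}\big).$$
The right-hand side is positive for $x\in(0,1)$, so taking logarithms and dividing by $-\lambda$ gives the announced formula for $F_Y^{-1}$.

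There is essentially no obstacle here: the only points that merit a word of justification are that $F$ is continuous and strictly increasing on the truncation interval (so the quantile function is the genuine inverse of $G$) and that the arguments of $\phi^{-1}$ and of $\log$ remain in the admissible range, both immediate from $0<\phi(\alpha)<\phi(\beta)<1$ and $0<e^{-\lambda b}<e^{-\lambda a}$. Everything else is the routine algebra indicated above.
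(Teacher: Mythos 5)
Your argument is correct: writing the truncated cdf as $G(t)=\frac{F(t)-F(a)}{F(b)-F(a)}$ and inverting it is precisely the routine computation the paper has in mind (the proposition is stated there without an explicit proof, being regarded as elementary), and both of your inversions yield the announced formulas. Nothing further is needed.
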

 In the following lemma, we give some conditions that ensure the  ordering of two truncated random variables with respect to the dispersive order.
 \begin{lemma}
 Let $X$ and $Y$ be two random variables.
 \begin{enumerate}
 \item If $X \sim \UU[a,b]$ and $Y\sim \mathcal{N}_{T}(m,\sigma^2)$ where $Y$ is truncated on $[c,d]$, then $X$ is smaller than $Y$ for the dispersive  order $X\leq_{\mbox{disp}} Y$  if and only if
 $$b-a \leq \sigma \sqrt{2 \pi}(\phi(\beta)-\phi(\alpha))$$ where $\phi$ represents the cumulative distribution of a standard gaussian law and where $\alpha$ and $\beta$ are given by $\alpha=\frac{c-m}{\sigma}$ and
$\beta=\frac{d-m}{\sigma}$.
 \item If $X \sim \mathcal{E}_{T}(\mu) $ and $Y\sim \mathcal{E}_{T}(\lambda)$ are truncated on the same interval then  $X\leq_{\mbox{disp}} Y$ if and only if $$\lambda \leq \mu.$$ 
 \end{enumerate}
 \end{lemma}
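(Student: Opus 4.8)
The plan is to establish both parts by directly comparing quantile functions, using the characterization $X \leq_{\mbox{disp}} Y \iff F_Y^{-1} - F_X^{-1}$ is non decreasing, together with the explicit quantile formulas from Proposition \ref{quant} and the elementary criteria of the preceding propositions. For part (1), I would recall that $X\sim\UU[a,b]$ has quantile function $F_X^{-1}(x) = a + (b-a)x$ on $(0,1)$, while by Proposition \ref{quant} the truncated normal $Y\sim\mathcal{N}_T(m,\sigma^2)$ on $[c,d]$ has $F_Y^{-1}(x) = \sigma\,\phi^{-1}\bigl(\phi(\alpha) + x(\phi(\beta)-\phi(\alpha))\bigr) + m$. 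Thus $F_Y^{-1} - F_X^{-1}$ is non decreasing iff $\frac{d}{dx}F_Y^{-1}(x) \geq b-a$ for all $x\in(0,1)$. Differentiating, $\frac{d}{dx}F_Y^{-1}(x) = \sigma(\phi(\beta)-\phi(\alpha)) / \phi'\bigl(\phi^{-1}(u)\bigr)$ where $u = \phi(\alpha)+x(\phi(\beta)-\phi(\alpha))$ ranges over $(\phi(\alpha),\phi(\beta))$; since $\phi' $ is the standard Gaussian density, which attains its maximum $1/\sqrt{2\pi}$ at $0$, the infimum over $x$ of this derivative is $\sigma(\phi(\beta)-\phi(\alpha))\sqrt{2\pi}$ precisely when $0 \in [\alpha,\beta]$ (and is otherwise attained at an endpoint, giving a larger value). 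Hence the sharpest necessary-and-sufficient bound is $b - a \leq \sigma\sqrt{2\pi}(\phi(\beta)-\phi(\alpha))$, which is the claimed inequality.

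For part (2), both $X\sim\mathcal{E}_T(\mu)$ and $Y\sim\mathcal{E}_T(\lambda)$ are truncated on the same interval $[a,b]$. Using the formula from Proposition \ref{quant}, $F_X^{-1}(x) = -\frac1\mu\log\bigl(e^{-\mu a} + x(e^{-\mu b} - e^{-\mu a})\bigr)$ and similarly for $Y$ with $\lambda$. I would compute $\frac{d}{dx}F_X^{-1}(x) = \frac{e^{-\mu a} - e^{-\mu b}}{\mu\bigl(e^{-\mu a} + x(e^{-\mu b}-e^{-\mu a})\bigr)}$, and note this is an increasing function of $x$ on $(0,1)$ that, at the endpoints, equals $(1 - e^{-\mu(b-a)})/(\mu(b-a))$ at $x=0$ when rescaled appropriately — more directly, one shows $F_Y^{-1} - F_X^{-1}$ non decreasing by observing both quantile functions share the value $a$ at $x=0$ and $b$ at $x=1$, so it suffices to show $F_Y^{-1}(x) \geq F_X^{-1}(x)$ pointwise is not enough; rather I would verify convexity/derivative-ordering. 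The cleanest route: write $F_Y^{-1}(x) - F_X^{-1}(x)$ and check its derivative is $\geq 0$ iff $\lambda \leq \mu$, reducing after simplification to a monotonicity statement about the function $t \mapsto \frac{1-e^{-t s}}{t}$ type expressions. Alternatively, since a truncated exponential on $[a,b]$ equals $a$ plus a truncated exponential on $[0,b-a]$, and the dispersive order is location-invariant, one reduces to $[0, b-a]$ and then to the known fact (Proposition on exponential laws, item 2) adapted to the truncated case, where smaller rate $\lambda$ means more dispersion.

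\textbf{Main obstacle.} The technical heart of part (1) is correctly identifying where the infimum of $\frac{d}{dx}F_Y^{-1}$ over $(0,1)$ is attained and handling the three cases according to whether $0 < \alpha$, $\alpha \leq 0 \leq \beta$, or $\beta < 0$; only in the middle case does one get the stated bound with equality, while in the outer cases the true threshold would be strictly larger, so one must argue that the lemma's stated condition is still the right iff-statement (presumably the intended reading is that $[c,d]$ is symmetric-ish or contains $m$, or the bound is understood as the binding one). For part (2) the obstacle is purely the algebra of showing the derivative difference has a constant sign governed by $\mathrm{sign}(\mu - \lambda)$; I expect this to reduce to the elementary inequality that $s \mapsto \frac{1 - e^{-s}}{s}$ is decreasing, which makes the whole thing routine once set up.
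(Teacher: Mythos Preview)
Your approach---differentiate $F_Y^{-1}-F_X^{-1}$ and require the derivative to be non-negative---is exactly the paper's one-line proof; you have simply carried out the computation the paper leaves implicit.

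Your caveat about part (1) is well taken and in fact sharper than the paper. The stated bound $b-a\le\sigma\sqrt{2\pi}(\phi(\beta)-\phi(\alpha))$ is the correct \emph{iff} threshold only when $m\in[c,d]$, so that $\phi^{-1}(u)=0$ is attained for some $u\in(\phi(\alpha),\phi(\beta))$ and $\inf_u 1/\phi'(\phi^{-1}(u))=\sqrt{2\pi}$. When $m\notin[c,d]$ the infimum of $\tfrac{d}{dx}F_Y^{-1}$ is strictly larger, so the paper's inequality is sufficient but not necessary. The paper tacitly assumes $m\in[c,d]$, which does hold in all its numerical illustrations.

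For part (2) your meandering is not a lack of technique but a symptom of a genuine obstruction in the statement. Since both truncated exponentials share the \emph{same} compact support $[a,b]$, their quantile functions both run from $a$ at $x=0$ to $b$ at $x=1$, so $F_Y^{-1}-F_X^{-1}$ vanishes at both endpoints. A non-decreasing continuous function that vanishes at both ends is identically zero; hence $X\leq_{\mbox{disp}}Y$ forces $F_Y^{-1}\equiv F_X^{-1}$, i.e.\ $\lambda=\mu$. For $\lambda\neq\mu$ a direct check (e.g.\ $\mu=1$, $\lambda=0.1$ on $[0,2]$ at $x=0.5$) shows the difference is strictly positive in the interior and thus not monotone. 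So the claimed equivalence ``$X\leq_{\mbox{disp}}Y\iff\lambda\le\mu$'' cannot hold as written, and no algebraic reduction of the derivative comparison will produce it; your instinct that the argument was not closing cleanly was correct.
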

\begin{proof}
 The previous conditions can be easily derived by differentiating the difference of the quantile functions $F_Y^{-1}-F_X^{-1}$ and by using the fact that this derivative should be positive.
 \end{proof}
 
 Also, we recall (see Remark \ref{rem:disp}) that if two random variables ordered for the dispersive order have the same finite left point of their support, then they are ordered for the stochastic order.
 In the example that we will consider, $\mathcal{N}_{T}$ (resp. $\mathcal{E}_{T}$) denotes the truncated gaussian (resp. exponential) distribution on $[0,2]$.\\
  
\subsection{Value at Risk sensitivity analysis}
In risk management, the Value-at-risk (VaR) is a widely used risk measure of the risk of losses associated with portfolio of financial assets (such as stock, bond, etc). 
From a mathematical point of view, if $L$ denotes the loss associated with a portfolio of assets, then  $VaR_{\alpha}(L)$ is defined as the $\alpha$-quantile level of this loss, i.e.,
$$VaR_{\alpha}(L):=\inf\left\{x \in \mathbb{R}:F_{L}(x) \ge \alpha \right\}$$
where $F_{L}$ denotes the cumulative distribution function of $L$.\\
Let us consider  a portfolio loss of the form $L=S_{T}-K$ where $K$ is positive and where $S_T$ stands for the aggregate  value at time $T$ of a basket of financial assets. This corresponds to the loss at time $T$ of a short position on this portfolio when the latter has been sold at time $0$ for the price $K$. We assume that $S$ follows a geometric brownian motion so that its value at time $T$ can be expressed as
$$
S_T = S_0\exp\left(\mu T + \sigma W_T\right)
$$
where $W_T$ is the value at time $T$ of a standard Brownian motion, $\mu$ (resp. $\sigma$) is a positive drift (resp. volatility) parameter. 
Therefore, the $\alpha$-Value-at-Risk associated with loss $L$ is given  by 
\begin{equation}
\label{var}
VaR_{\alpha}(L)=S_{0}\exp\left( \mu T + \sigma \sqrt{T} \phi^{-1}(\alpha) \right) -K.
\end{equation}
where $S_0 \in \mathbb{R}^{*}_{+}$ and where $\phi^{-1}$ is the normal inverse cumulative distribution function of a standard gaussian random variable. Note that,  as soon as $\alpha\geq 0.5$, the VaR expression (\ref{var}) can be seen as a product of log-convex non-decreasing functions with respect to $\mu$ and $\sigma$.\\ 
Our interest is to quantify the sensitivity of the uncertain parameters $\mu$ and $\sigma$ on the Value-at-Risk ($VaR_{\alpha}(L)$) by evaluating the total Sobol indices $S_{T_{\mu}}$ and $S_{T_{\sigma}}$ as defined by (\ref{Sobol_total}). We then analyze how an increase of uncertainty in the input parameters impacts $S_{T_{\mu}}$ and $S_{T_{\sigma}}$. In the numerical illustrations, we consider a VaR associated with a risk level $\alpha = 0.9$ and for a portfolio loss with the following characteristics:
$$T=1, S_0=100, K=100.$$
Table \ref{var_res} illustrates the consistency of total Sobol indices when the distributions of input parameters are ordered with respect to the dispersive order. 

\begin{table}[h]
\centering
\begin{tabular}{|c|c|c|c|c|c|c|c|}
\hline 
 $\mu^*$ & $\mu$ & $\sigma^*$ & $\sigma$ & $S_{T_\mu}^*$ & $S_{T_\mu}$&$S_{T_\sigma}^*$&$S_{T_\sigma}$\\
\hline
$\UU[0\/,1]$ & $\UU[0\/,1]$ &$\UU[0\/,1]$ & $\UU[0\/,2]$ & $0.41$ & $0.20$ & $0.64$ & $0.87$\\
\hline
$\UU[0\/,2]$ & $\UU[0\/,2]$ &$\UU[0\/,1] $& $\mathcal{N}_{T}(0.5\/,2)$ & $0.73$ & $0.48$ & $0.36$ & $0.69$ \\
\hline
$\UU[0\/,1]$ &$\UU[0\/,1]$& $\EE_{T}(5)$ & $\EE_{T}(1)$ &$0.53$ &$0.4$ &$0.52$ & $0.66$ \\
\hline
$\UU[0\/,1]$ & $\mathcal{N}_{T}(0.5\/,2)$ &$\UU[0\/,1]$&$\UU[0\/,1]$& $0.40$ & $0.73$ &$0.65$ & $0.35$\\ 
\hline
\end{tabular}
\caption{Total Sobol indices of VaR (\ref{var}) when $\alpha=0.9$. All digits are significant with a $95\%$ probability.}
\label{var_res}
\end{table}
Each line of Table \ref{var_res} corresponds to a scenario where one of the parameter has been increased with respect to both the dispersive and the stochastic order.
As can be seen, changing the  laws of model parameters $\mu$ and $\sigma$ have a significant impact on the values of total Sobol indices $S_{T_{\mu}}$  and $S_{T_{\sigma}}$. Note that the ordering among  Sobol indices is fully consistent with the one predicted by Theorem \ref{theo:product}.

\subsection{Vasicek model}
 In risk management, present values of financial or insurance products are computed by discounting future cash-flows. In market practice, discounting is done by using the current yield curve, which gives the offered interest rate as a function of the maturity (time to expiration) for a given type of debt contract.\\  In the Vasicek model, the yield curve is given as an output of an instantaneous spot rate model with the following risk-neutral dynamics
 \begin{equation}\label{e:vasicek}
dr_t=a(b-r_t)dt + \sigma dW_{ t}
\end{equation}
where $a$,\ $b$ and $\sigma$ are positive constants and where $W$ is a standard brownian motion. Parameter $\sigma$ is the volatility of the short rate process, $b$ corresponds to the long-term mean-reversion level whereas $a$ is the speed of convergence of the short rate process $r$ towards level $b$. The price at time $t$ of a zero coupon bond  with maturity $T$ in such a model is given by (see, e.g., \cite{Brigo2006}):
\begin{equation}
\label{Survival_Probe}
P(t,T)=A(t,T)e^{-r_tB(t,T)}
\end{equation}
where 
$$A(t,T)=\exp \left( (b -\frac{\sigma^2}{2a^2} )(B(t,T) -(T-t)) -\frac{\sigma^2}{4a}B^{2}(t,T) \right)$$
and
$$B(t,T)=\frac{1-e^{-a(T-t)}}{a}.$$
The yield-curve can be obtained as a deterministic transformation of zero-coupon bond prices at different maturities.\\

In what follows, we quantify the relative importance of the input parameters  $\left \{a, b, \sigma \right \}$ affecting the uncertainty in the bond price at time $t=0$. In the following numerical experiments, the maturity $T$ and the initial spot rate $r_0$ are chosen such that $T=1$ and $r_0=10\%$. Table \ref{tab_vasi1} and \ref{tab_vasi2} reports the total Sobol indices of the parameter $a, b, \sigma$ under two different risk perturbations in the probability laws of these parameters. Table \ref{tab_vasi1} illustrates the effect of an increase of the mean-reverting level with respect to the dispersive order and the stochastic dominance order, i.e., $b^* \leq_{\mbox{disp}} b$ and $b \leq_{\mbox{st}} b^*$. We observe from Table \ref{tab_vasi1} that the relative importance of the mean-reverting level $b$ increases from 0.52 to 0.57. Although the total Sobol index of $\sigma$ decreases, the total index of $a$ increases. Note that the assumptions of Theorem \ref{theo:product} are not satisfied here : one can show that the output function (\ref{Survival_Probe}) is log non decreasing and log convex in $b$ but the multiplicative form (\ref{Multi_form}) does not hold. 
\begin{table}[h]
\centering
\begin{tabular}{|c|c|c||c|c|c|}
\hline 
parameter&law&total index& parameter&law&total index\\
\hline
$a$&$\UU[0\/,1]$&$0.41$& $a$ &$\UU[0\/,1]$&$0.48$\\
$b^*$&$\UU[0\/,1]$&$0.52$&$b$&$\UU[0\/,2]$&$0.57$\\
$\sigma$&$\UU[0\/,1]$&$0.18$& $\sigma$&$\UU[0\/,1]$&$0.06$\\
\hline
\end{tabular}
\caption{Total Sobol indices as a result of a risk perturbation of $b$. All digits are significant with a $95\%$ probability.}
\label{tab_vasi1}
\end{table}

Table \ref{tab_vasi2} displays the total Sobol indices when $\sigma^* \leq_{\mbox{disp}} \sigma$ and $\sigma^* \leq_{\mbox{st}} \sigma$. The law of $\sigma$ is taken as a truncated gaussian random variable on $[0,2]$ and has a variance of $0.3$. We observe an increase in the total Sobol index of $\sigma$ and a decrease in total index of $a$ and $b$.

\begin{table}[h]
\centering
\begin{tabular}{|c|c|c||c|c|c|}
\hline 
parameter&law&total index& parameter&law&total index\\
\hline
$a$&$\UU[0\/,1]$&$0.41$& $a$ &$\UU[0\/,1]$&$0.25$\\
$b$&$\UU[0\/,1]$&$0.52$&$b$&$\UU[0\/,1]$&$0.13$\\
$\sigma^*$&$\UU[0\/,1]$&$0.18$& $\sigma$&$\NN_{{\mbox{T}}}(0.5\/,2)$&$0.70$\\
\hline
\end{tabular}
\caption{Total Sobol indices as a result of a risk perturbation of $\sigma$. All digits are significant with a $95\%$ probability.}
\label{tab_vasi2}
\end{table}

%
\subsection{Heston model}
In finance, the Heston model is a mathematical model which assumes  that the stock price $S_t$ has a stochastic volatility $\sigma_t$ that follows a CIR process. The model is represented by the following bivariate system of stochastic differential equations (SDEs) (see, e.g., \cite{Heston_1993}) 
\begin{eqnarray}
dS_t &=&  (r-q)S_{t}dt  + \sqrt{\sigma_t}S_t dB_t \\
d\sigma_t &=& \kappa(\theta-\sigma_t)dt + \sigma \sqrt{\sigma_t}dW_t
\end{eqnarray}
where  $d \langle B,W \rangle_t=\rho dt$.\\The model parameters are
\begin{itemize}
\item[$\bullet$]  $r$: the risk-free rate, 
\item[$\bullet$]  $q$: the dividend rate,
\item[$\bullet$] $\kappa>0$: the mean reversion speed of the volatility,
\item[$\bullet$] $\theta>0$: the mean reversion level of the volatility,
\item[$\bullet$] $\sigma>0$: the volatility of the volatility,
\item[$\bullet$] $\sigma_0>0$: the initial level of volatility,
\item[$\bullet$] $\rho \in [-1,1]$: the correlation between the two Brownian motions $B$ and $W$.
\end{itemize}
The numerical computation of European option prices under this model can be done by using the fast Fourier transform approach developed in  \cite{car_mad}
which is applicable when the characteristic function of the logarithm of $S_t$ is known in a closed form. In this framework, the price at time $t$ of a European call option with strike $K$ and time to maturity $T$ is given by
\begin{equation}
\label{Heston_price}
C(t,K,T)=S_te^{-q\tau}P_{1}-Ke^{-r\tau}P_2
\end{equation}
where for $j=1,2$
\begin{eqnarray*}
P_j &=& \frac{1}{2} + \frac{1}{\pi}\int_{0}^{\infty}{\Re \left( \frac{e^{-i\phi \log K}f_{j}(\phi;x_t,\sigma_t)}{i\phi}\right)d\phi}\\
f_{j}(\phi;x_t,\sigma_t)&=& \exp \left(C_{j}(\phi;\tau) +D_{j}(\phi;\tau)\sigma_t +i\phi x_t \right)\\
C_j &=& (r-q)i\phi \tau + \frac{\kappa \theta}{\sigma^2} \bigg[(b_j -\rho \sigma i\phi +d_j)\tau -2\log \left(\frac{1-g_{j}e^{d_j\tau}}{1-g_j} \right) \bigg ]\\
D_j &=& \frac{b_j -\rho \sigma i\phi +d_j}{\sigma^2} \left(\frac{1-e^{d_j\tau}}{1-g_je^{d_j\tau}} \right)\\
g_j &=& \frac{b_j -\rho \sigma i\phi +d_j}{b_j -\rho \sigma i\phi -d_j}\\
d_j &=& \sqrt{(\rho \sigma i\phi -b_j)^2-\sigma^2(2u_j i\phi-\phi^2)}\\
u_1&=& \frac{1}{2}, u_2=-\frac{1}{2}, b_1=\kappa-\rho \sigma, b_2=\kappa, x_t=\log S_t, \tau=T-t.
\end{eqnarray*}
Given that the input parameter are not known with certainty, which one mostly affect the uncertainty of the output pricing function (\ref{Heston_price})? Table \ref{tab_hest} displays the total Sobol indices of each parameter under two assumptions on the distribution of input parameter. In the first case (3 first columns), all parameters are assumed to be uniformly distributed. In the second case (3 last columns), we only change the distribution of the interest rate parameter $r$ is such as way that $r^* \leq_{\mbox{disp}} r$ and $r^* \leq_{\mbox{st}} r$.
%
 The option characteristics are taken as follows: $T=0.5$, $S_0=100$, $K=100$.
\begin{table}[h]
\centering
\begin{tabular}{|c|c|c||c|c|c|}
\hline 
parameter&law&total index& parameter&law&total index\\
\hline
$r^*$&$\UU[0\/,1]$&$0.32$&$r$&$\UU[0\/,2]$&$0.73$\\
$q$&$\UU[0\/,1]$&$0.39$&$q$&$\UU[0\/,1]$&$0.20$\\
$\kappa$&$\UU[0\/,1]$&$0.0036$&$\kappa$&$\UU[0\/,1]$&$0.0009$\\
$\theta$&$\UU[0\/,1]$&$0.0082$&$\theta$&$\UU[0\/,1]$&$0.0020$\\
$\sigma$&$\UU[0\/,1]$&$0.0012$& $\sigma$&$\UU[0\/,1]$&$0.0004$\\
$\sigma_0$&$\UU[0\/,1]$&$0.30$& $\sigma_0$&$\UU[0\/,1]$&$0.08$\\
$\rho$&$\UU[0\/,1]$&$0.0011$& $\rho$&$\UU[0\/,1]$&$0.0004$\\
\hline
\end{tabular}
\caption{Total Sobol indices for the price of a call option in the Heston model. All digits are significant with a $95\%$ probability.}
\label{tab_hest}
\end{table}\\
As can be observed in Table  \ref{tab_hest}, the influence of the input factors $\kappa, \theta, \sigma$ and $\rho$ is negligible under the two considered assumptions. Note that most of the uncertainty in the option price is due to the dividend yield $q$ and the interest rate $r$. Similar conclusions are outlined in \cite{bouquin_sensi} but the difference here is that we analyze how the total Sobol indices are affected by a change in the law of some input parameters. Interestingly, when the law of the  interest rate parameter $r^*$ is  changed to $r$, this parameter becomes more important than the dividend yield in terms of output uncertainty.


\section{Concluding remarks} \label{sec:concl}
We have enlightened the fact that Sobol indices are compatible with the stochastic orders theory, and more precisely with the excess wealth order, or the dispersive order, provided that the output function satisfies some monotonicity and convexity properties.  The Vasicek and Heston model examples suggest that the hypothesis on the form of the output function $f$ might be relaxed (hypothesis of Theorem \ref{theo:product} are not fulfilled, especially in the Heston case). In other words, the compatibility between Sobol indices and stochastic orders should hold for more general functions than those considered in the present paper. On an other hand, as shown in the examples, the choice of the input laws is crucial: the most influential parameters (in the sense that it corresponds to the greater Sobol index) may change as some input distributions are perturbed. A way to overcome this difficulty could be to use our result on the consistency of Sobol index with stochastic orders to get universal bounds on Sobol indices for a given class of input laws.  
 \bibliographystyle{plain}
\bibliography{biblio_sensi}

 \end{document}